\numberwithin{equation}{subsection}
\newtheorem{defn}{Definition}[section]
\newtheorem{thm}{Theorem}[section]
\newtheorem{cor}{Corollary}[section]
\newtheorem{lemma}{Lemma}[section]
\newtheorem{proposition}{Proposition}[section]
\title{Reconstruction of hypergraphs from line graphs and degree sequences }
\author[1]{Amitava Bhattacharya }
\author[2]{Aloysius Godinho } 
\author[3]{Pritam Majumder}
\author[4]{Navin  Singhi }
\affil[1,3,4]{School of Mathematics, Tata Institute of Fundamental Research, Mumbai, India\\ \texttt{\{amitava, pritam, singhi\}@math.tifr.res.in}}
\affil[2]{Department of Mathematics, 
Rosary College,
Goa, India\\
 \texttt{aloysius@rosarycollege.org}
}
\date{}
\begin{document}

\maketitle

\begin{abstract}
 In this paper we consider the problem  to reconstruct a $k$-uniform hypergraph  from its line graph. In general this problem is hard. We solve this problem when the number of hyperedges containing any pair of vertices is bounded.  Given an integer sequence, constructing a $k$-uniform hypergraph with that as its degree sequence is NP-complete. Here we  show that for constant integer sequences the question can be answered in polynomial time using Baranyai's theorem.
 \end{abstract}

 \noindent {\bf 2000 Mathematics Subject Classification.}
 {\bf  05C65, 05C75, 05C76, 05C21}

 \noindent {\bf Key words.}  Bipartite graph, maximal
 eigenvalue, Brualdi-Hoffman conjecture, degree, sequences, chain graphs.

\section{Introduction}

Let $X = \{x_1,x_2, ..., x_n\}$ be a finite set. A {\it hypergraph} on $X$ is a family
$E = \{E_1,E_2,\ldots , E_m\}$ of non-empty subsets of $X$. 
Elements of $X$ are
called the vertices, while those of $E$ are called the edges of $H$. 

A hypergraph $H=(X, E)$ is said to be $k$-{\it uniform} if  $E \subset \binom {X}  {k}$, the set
of all $k$-subsets of $X$, where $k \geq 2$. A hypergraph $H$ is said
to be {\it linear} if every pair of distinct vertices of $H$ is in at most one edge of $H$. A $2$-uniform linear hypergraph is
called a graph.

\begin{defn}
The line graph of a hypergraph $H =(X, E)$, denoted
by $L(H)$, is the graph where $V(L(H)) = E$ and $E(L(H))$ is the set of all unordered
pairs $\{e, e'\}$ of distinct elements of $E$ such that $e\cap e' \neq \phi$ in $H$. 
\end{defn}

We denote the set of line graphs of $k$-uniform hypergraphs and $k$-uniform linear hypergraphs  by $L_k$ and $L_k^l$ respectively.\par\medskip 

\noindent For a graph $G$, we shall denote its  vertex set by $V( G)$, while
the edge set will be denoted by $E(G)$. Degree of a vertex is the number of edges that contain that vertex. The degree of an edge $xy$  is defined to be the number of distinct triangles in $G$ containing the edge. The minimum edge degree in the graph is denoted by $\delta_e(G)$ or simply $\delta_e$. For $W \subset V(G)$, $N(W)$ denotes the subset of vertices in $G$ which are adjacent to every vertex in $W$. A clique in $G$ refers to both a set of pairwise adjacent vertices and the corresponding induced complete
subgraph. The size of a clique is the number of vertices in the clique. \par\bigskip

In this paper we consider the problem of reconstruction of hypergraphs when partial information about its structure is given.
For simple graphs these questions  are well studied. 

If the degree sequence of the graph (degree of every vertex as a tuple) is specified then there are well known characterizations.
In particular the
Havel-Hakimi algorithm (\cite{havel, hakimi}) or Erd\"os Gallai inequalites (\cite{erdos_gallai}) give a good characterization. 
In the book \cite{book_peled} more than eight different characterizations are given.
For hypergraphs this problem is very hard. In \cite{onn,onn1} it was shown that this problem is NP-Complete by reducing it to a problem related to the {\it 3-partition problem}. It is unlikely that a `good' characterization for this problem exists.
In Section 3, we give a solution for characterizing  the degree sequence of  hypergraphs when all the degrees are constant. The solution uses an integral version of max-flow min-cut theorem.

\noindent The problem of characterizing the class of graphs $L_k$ has been studied for a very long time. Beineke \cite{beineke} in his classical work on line graphs characterized the class $L_2^l$ by a finite list of forbidden
induced subgraphs (finite characterization). This was later expanded upon by Bermond and Meyer \cite{meyer} who obtained a finite characterization for the class $L_2$ (intersection graphs of multigraphs). Lov\'asz \cite{lovasz} showed that for $k \geq 3$ the class $L_k^l$ has no finite characterization. Niak \textit{et al.} \cite{niak} obtained a finite characterization for the set of graphs in $L_3^l$ with $\delta \geq 69$., where $\delta$ represents the minimum vertex degree in a graph. This was further improved by Skums \textit{et al.} \cite{skums} who obtained a finite characterization for graphs in $L_3^l$ with $\delta \geq 16$. Metelsky \cite{Metelsky} proved that for $k \geq 4$ and any positive integer $a$, the set of graphs in $L_k^l$ with $\delta \geq a$ has no finite characterization.   \par\medskip

\noindent For a hypergraph $H=(X,E)$ and $z \in X$, the degree $d_H(z)$ of $z$ is defined to be the number of
edges of $H$ containing $z$, 
the maximum degree of the hypergraph $H$ is denoted by
$\Delta(H) = \max\limits_{z \in X} d_H(z)$. 
Similarly, for a pair of vertices $\{x,y\} \subset X$, we define the pair degree $d_H(\{x,y\})$ to be the number of edges in $H$ containing the pair $\{x,y\}$. We denote the maximum pair degree in $H$ by $\Delta_2(H)=\max_{\{ x,y\} \subset X} d_H(\{ x,y\})$. $\Delta_2(H)$ is called the \textit{multiplicity} of the hypergraph $H$. A hypergraph is linear if $\Delta_2(H)=1$.  Denote the set of intersection graphs of $k$-uniform hypergraphs with $\Delta_2(H)\leq p$ by $L_k^{(p)}$. Observe that for $p>2$, $L_k^l\subset L_k^{(2)} \subset \cdot \cdot \cdot \subset L_k^{(p)} $. 

In Section 2 we prove the following main theorem: 

\begin{thm}\label{main}
There is a polynomial $f(k,p)$ of degree at most 4 with the property that,
given any pair $k,p$, there exists a finite family $\mathcal{F}(k,p)$ of forbidden graphs such that any graph $G$
with minimum edge-degree at least $f(k,p)$ is a member of $L_k^{(p)}$ if and only if $G$ has no
induced subgraph isomorphic to a member of $\mathcal{F}(k,p)$.
\end{thm}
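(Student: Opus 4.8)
\medskip
\noindent\textbf{Proof strategy.} The plan is to reformulate membership in $L_k^{(p)}$ as a Krausz-type clique-cover condition and then use the large minimum edge-degree to make that cover, and every obstruction to it, visible inside bounded induced subgraphs. Step one is the \emph{Krausz characterization}: $G\in L_k^{(p)}$ if and only if $G$ admits a $(k,p)$-clique cover, that is, a family $\mathcal C$ of cliques of $G$ such that every edge of $G$ lies in some member, every vertex of $G$ lies in at most $k$ members, and any two distinct members share at most $p$ vertices. If $G=L(H)$ the stars $C_w=\{e\in E(H):w\in e\}$, $w\in X$, form such a cover: two edges of $H$ meet if and only if they share a star, each edge lies in at most $k$ stars, and $|C_w\cap C_{w'}|=d_H(\{w,w'\})\le p$. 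Conversely, from a $(k,p)$-clique cover one builds a $k$-uniform hypergraph $H$ on the vertex set $\mathcal C$ together with private padding vertices, with one edge $e_v=\{C\in\mathcal C:v\in C\}\cup(\text{padding to size }k)$ for each $v\in V(G)$; then $\Delta_2(H)\le p$ and $L(H)=G$ (making the $e_v$ distinct is a routine adjustment).

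Step two are two rigidity lemmas, which use only $k$-uniformity and $\Delta_2(H)\le p$ and which I would state for an arbitrary such $H$, so that they apply inside \emph{any} ambient line graph. (i) Every clique of $L(H)$ of size greater than $pk^2+1$ has a unique common vertex in $H$, hence lies inside a star and, if maximal, equals one. (ii) For any edge $uv$ of $L(H)$, at most $p(k-1)^2$ triangles $uvx$ have $x$ meeting both $u\setminus v$ and $v\setminus u$, so any edge lying in more than $pk^2+(k-1)m$ triangles lies in a star of size greater than $m+2$. Both are short two-step pigeonhole arguments: in (i), a clique of size $q$ with no common vertex produces two vertices lying jointly in more than $(q-1)/k^2$ edges; in (ii), each ``transversal'' apex $x$ lies in $C_a\cap C_b$ for some $a\in u\setminus v$, $b\in v\setminus u$. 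Now fix $g_1=pk^2+2$, set $f(k,p)=pk^3+2k-1=pk^2+(k-1)g_1+1$ (a polynomial of degree $4$, dominant term $\sim pk^3$), and let $\mathcal C^{*}(G)$ be the family of maximal cliques of $G$ of size greater than $g_1$. If $G\in L_k^{(p)}$ and $\delta_e(G)\ge f(k,p)$, then by (ii) every edge lies in a star of size greater than $g_1$, hence in a member of $\mathcal C^{*}(G)$, and by (i) every member of $\mathcal C^{*}(G)$ is a star; so $\mathcal C^{*}(G)$ is a $(k,p)$-clique cover. Conversely, for \emph{any} $G$ with $\delta_e(G)\ge f(k,p)$, if $\mathcal C^{*}(G)$ is a $(k,p)$-clique cover then $G\in L_k^{(p)}$ by Step one. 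Thus, under $\delta_e(G)\ge f(k,p)$, we have $G\in L_k^{(p)}$ if and only if $\mathcal C^{*}(G)$ is a $(k,p)$-clique cover.

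Step three localizes the obstruction. The family $\mathcal C^{*}(G)$ fails to be a $(k,p)$-clique cover exactly when one of the following holds: (a) some edge lies in no member; (b) some vertex lies in $k+1$ members; (c) two members share $p+1$ vertices. In each case I would exhibit an induced subgraph $G'\subseteq G$ on at most $n_0(k,p)=O(pk^3)$ vertices with $G'\notin L_k^{(p)}$. For (a): take an edge $uv$ lying in no member of $\mathcal C^{*}(G)$ together with $f(k,p)$ of its triangle-apices; since $uv$ lies in no clique of $G$ of size greater than $g_1$ it lies in no such clique of $G'$, yet if $G'\in L_k^{(p)}$ then $uv$ has more than $pk^2+(k-1)g_1$ triangles in $G'$, so (ii) applied inside $G'$ puts $uv$ in a clique of size greater than $g_1$, a contradiction. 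For (b): take $v$, a $(g_1+1)$-element subclique of each of $k+1$ members of $\mathcal C^{*}(G)$ through $v$, and, for each pair of those members, a non-adjacent ``cross pair'' of vertices (one in each member, not in the other), which exists by maximality; if $G'\in L_k^{(p)}$ then (i) gives each of the $k+1$ subcliques a common vertex of the underlying $H'$, necessarily lying in the $k$-set $v$, so two of these coincide and the corresponding cross pair is forced adjacent, contradicting induced-ness. Case (c) is analogous, with two subcliques sharing $p+1$ vertices: inside $L(H')$ a common vertex of one subclique equal or unequal to that of the other forces, respectively, $p+1$ edges through a single pair of $H'$-vertices or an unwanted adjacency. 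Let $n_0(k,p)$ be a bound for $|V(G')|$ over all three constructions.

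Finally, since $L_k^{(p)}$ is closed under induced subgraphs (deleting an edge of $H$ restricts $L(H)$ to an induced subgraph without increasing $\Delta_2$), Step three shows that for $G$ with $\delta_e(G)\ge f(k,p)$ one has $G\in L_k^{(p)}$ if and only if $G$ contains no induced subgraph on at most $n_0(k,p)$ vertices lying outside $L_k^{(p)}$, and hence (a minimal such subgraph then belongs to the list) if and only if $G$ contains no induced copy of a member of the family $\mathcal F(k,p)$ consisting of all graphs on at most $n_0(k,p)$ vertices that are minimal, in the induced-subgraph order, among graphs not in $L_k^{(p)}$; this family is finite, and $f$ has degree $4$. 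The main obstacle is the combination of Step two and case (a): the rigidity lemmas must be proved with \emph{polynomial} bounds (degree at most $4$) and in a form robust under embedding into an unknown line graph, so that the purely local configuration in each of (a)--(c) is by itself incompatible with $L_k^{(p)}$. A naive treatment of (a) through Ramsey's theorem would only yield an exponential bound; bounded multiplicity is precisely what allows Ramsey to be replaced by the cheap pigeonhole behind (i) and (ii).
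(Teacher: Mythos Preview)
Your proposal is correct and follows essentially the same architecture as the paper: a Krausz-type clique-cover characterization of $L_k^{(p)}$ (your Step~1 is the paper's Proposition~2.1), a rigidity lemma forcing large cliques to be stars (your lemma~(i) is the paper's Lemma~2.3), and then verification that the family $\mathcal{C}^{*}(G)$ of large maximal cliques satisfies the three cover conditions when $\delta_e$ is large (your cases (a)--(c) correspond to the paper's Lemmas~2.6, 2.8, 2.9). The constants differ slightly ($f=pk^3+2k-1$ versus the paper's $pk^3+(p-3)k+1$; threshold $g_1=pk^2+2$ versus $pk^2+(p-2)k+2$), but both are degree-$4$ polynomials.

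There are two genuine, if modest, differences worth noting. First, the paper \emph{exhibits} the forbidden family explicitly as $\mathcal{F}_1\cup\mathcal{F}_2\cup\mathcal{F}_3\cup\{(k{+}1)\text{-claw}\}$ and proves necessity and sufficiency separately against that list; you instead define $\mathcal{F}(k,p)$ implicitly as the minimal non-members of $L_k^{(p)}$ on at most $n_0(k,p)$ vertices, relying on hereditariness of $L_k^{(p)}$ for necessity and on your bounded-witness extraction for sufficiency. Your route is cleaner logically but less informative about what the obstructions look like. Second, for ``every edge lies in a large clique'' the paper (Lemma~2.6) builds a maximal claw at one endpoint and uses $(k{+}1)$-claw-freeness together with the $\mathcal{F}_1$ bound $|N(\{z,w_i\})|\le pk^2$ to carve out a large clique in $N(\{x,z\})$; you bypass claws entirely via your lemma~(ii), bounding ``transversal'' triangle apices by $p(k-1)^2$ and forcing a large star through $u\cap v$ by pigeonhole. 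Your argument is shorter and self-contained inside any putative realizing hypergraph, which is exactly what lets you run it inside the small witness $G'$ in case~(a); the paper's claw argument, by contrast, needs the ambient forbidden-subgraph hypotheses. Both work, and both give the same polynomial degree.
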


\section{Reconstructing Hypergraphs from line graphs}
\begin{lemma}
If $G \in L_k^{(p)}$ then the $G$ does not contain a $k+1$ claw.
\end{lemma}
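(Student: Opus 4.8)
The plan is to unwind the definition of $L_k^{(p)}$ and then argue by contradiction, the contradiction coming from a pigeonhole count on the $k$ vertices of a single hyperedge. First recall what the hypothesis gives us: $G \in L_k^{(p)}$ means there is a $k$-uniform hypergraph $H=(X,E)$ with $\Delta_2(H)\le p$ and $G=L(H)$. In particular each vertex of $G$ is a $k$-subset of $X$, and two vertices of $G$ are adjacent if and only if the corresponding hyperedges intersect. I would remark at the outset that the multiplicity bound $p$ plays no role in what follows, so the lemma in fact holds for every $G\in L_k$.

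Next, suppose toward a contradiction that $G$ contains an induced copy of the claw $K_{1,k+1}$, with its center corresponding to a hyperedge $e\in E$ and its $k+1$ leaves corresponding to pairwise distinct hyperedges $f_1,\dots,f_{k+1}\in E$, each distinct from $e$. Since each leaf is adjacent to the center in $G$, we have $e\cap f_i\neq\emptyset$ for every $i$, so we may choose a vertex $v_i\in e\cap f_i$. Because the claw is \emph{induced}, the leaves are pairwise non-adjacent in $G$, which translates to $f_i\cap f_j=\emptyset$ for all $i\neq j$.

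Now I would observe that the chosen witnesses $v_1,\dots,v_{k+1}$ are pairwise distinct: if $v_i=v_j$ for some $i\neq j$, then this common vertex would lie in $f_i\cap f_j$, contradicting $f_i\cap f_j=\emptyset$. Hence $\{v_1,\dots,v_{k+1}\}$ is a set of $k+1$ distinct elements, all of which lie in $e$; this contradicts $|e|=k$, and the argument is complete.

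As for difficulty, there is essentially no obstacle here: the only point requiring a little care is the translation between the combinatorics of the star and that of the hyperedges, namely that it is precisely the induced-ness of $K_{1,k+1}$ that forces the leaf-hyperedges to be pairwise disjoint, and that the $k+1$ leaves of a star are genuinely distinct hyperedges. One might also pause over the possibility that $H$ has repeated hyperedges when $p\ge 2$, but this is harmless, since the witness argument only uses the pairwise disjointness of $f_1,\dots,f_{k+1}$ and never their distinctness as sets.
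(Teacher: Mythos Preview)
Your proof is correct and follows essentially the same argument as the paper: translate adjacency and non-adjacency in $G$ into intersection and disjointness of hyperedges, and use pigeonhole on the $k$ elements of the central hyperedge to bound the number of pairwise disjoint neighbors. Your write-up is in fact slightly more explicit than the paper's (which omits the verification that the witnesses $v_i$ are distinct), and your remark that the bound on $\Delta_2(H)$ is irrelevant here is accurate.
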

\begin{proof}
Let $H=(X,E)$ be a $k$-uniform hypergraph with $\Delta_2(H) \leq p$ such that $G=L(H)$. Let $\langle x;y_1,y_2,\ldots ,y_{r} \rangle$ be a claw in $G$. If $x=\{x_1,x_2,\ldots ,x_k \} \in E$, then $x \cap y_i \neq \phi$ for every $i=1,2,\ldots , r$. Since each distinct pair $y_i,y_j$ are non-adjacent in $G$, $y_i \cap y_j=\phi$ in $E$. Therefore it follows that $r \leq k$.
\end{proof}

Figure~\ref{pic_claw1} and Figure~\ref{pic_claw3} illustrates the above lemma. Note that the hyperedge $0$ has $k+1$ vertices and hence the hypergraph fails to be $k$-uniform.

\begin{figure}[h]
\centering
\includegraphics[scale=1.0]{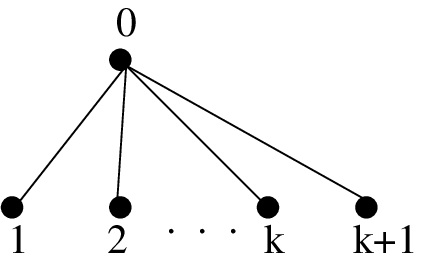}
\caption{}
\label{pic_claw1}
\end{figure}%
\begin{figure}[h]
\centering
\includegraphics[scale=0.8]{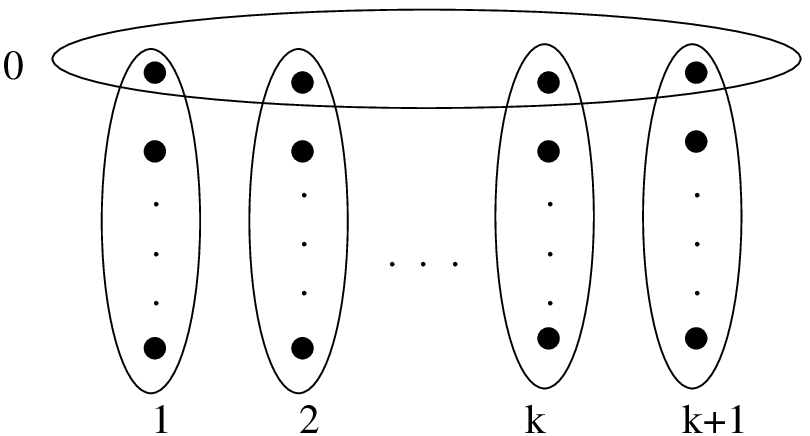}
\caption{}
\label{pic_claw3}
\end{figure}

\begin{lemma}\label{nonadj}
If $G \in L_k^{(p)}$ and $a,b \in V(G)$ such that $ab \not\in E(G)$, then $|N(\{a,b\})|\leq pk^2$ (where $N(\{a,b\}$ denotes the set of vertices adjacent to both $a$ and $b$).
\end{lemma}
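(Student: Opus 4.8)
The plan is to unwind the definitions and reduce the statement to an elementary double count. Fix a $k$-uniform hypergraph $H=(X,E)$ with $\Delta_2(H)\le p$ and $G=L(H)$, so that $a$ and $b$ are themselves $k$-subsets of $X$ lying in $E$. Since $ab\notin E(G)$ and $a\ne b$, the definition of the line graph forces $a\cap b=\emptyset$ in $H$. Write $a=\{a_1,\dots,a_k\}$ and $b=\{b_1,\dots,b_k\}$; these are $2k$ distinct vertices of $X$.

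Next I would analyze an arbitrary $c\in N(\{a,b\})$. As a vertex of $G=L(H)$, $c$ is an edge of $H$ adjacent in $L(H)$ to both $a$ and $b$, hence $c\cap a\ne\emptyset$ and $c\cap b\ne\emptyset$. Choose any $x\in c\cap a$ and any $y\in c\cap b$. Because $a\cap b=\emptyset$ we have $x\ne y$, so $\{x,y\}$ is a genuine pair of vertices with $x\in a$, $y\in b$, and $\{x,y\}\subseteq c$. This exhibits $c$ as an edge of $H$ through the pair $\{x,y\}$, i.e.
\[
N(\{a,b\})\ \subseteq\ \bigcup_{x\in a}\ \bigcup_{y\in b}\ \bigl\{\,c\in E\ :\ \{x,y\}\subseteq c\,\bigr\}.
\]
Taking cardinalities and using $d_H(\{x,y\})\le \Delta_2(H)\le p$ for every such pair, together with $|a|=|b|=k$, gives $|N(\{a,b\})|\le \sum_{x\in a}\sum_{y\in b} d_H(\{x,y\})\le k\cdot k\cdot p = pk^2$, as claimed.

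There is no serious obstacle here; the argument is a union bound over the $k^2$ pairs spanning the disjoint edges $a$ and $b$, and it is deliberately wasteful (an edge $c$ meeting $a$ and $b$ in several points is counted several times). The only points that actually use the hypotheses are: that $a\cap b=\emptyset$, which is exactly what non-adjacency in the line graph provides and which is needed to guarantee $x\ne y$ so that $\{x,y\}$ is a legitimate pair; and the pair-degree bound $\Delta_2(H)\le p$, which is the defining property of $L_k^{(p)}$. One should also record that the degenerate case $a=b$ is excluded by reading $ab\notin E(G)$ as a non-edge between distinct vertices. A sharper constant could presumably be extracted by accounting for the overcounting, but the crude bound $pk^2$ is all that is needed for Theorem~\ref{main}.
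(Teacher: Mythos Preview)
Your proof is correct and follows essentially the same approach as the paper: identify $a$ and $b$ with disjoint $k$-edges of $H$, observe that any common neighbour $c$ contains a pair $\{x,y\}$ with $x\in a$, $y\in b$, and bound by $k^2$ pairs times $p$ edges per pair. Your write-up is in fact slightly more careful than the paper's, making the disjointness step $x\ne y$ and the union bound explicit.
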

\begin{proof}
Let $H=(X,E)$ be a $k$-uniform hypergraph with $\Delta_2(H) \leq p$ such that $G=L(H)$. Let $a=\{a_1,a_2,\ldots ,a_k \}, b=\{b_1 ,b_2 ,\ldots , b_k \} \in E$ and $a\cap b=\phi $. Now for every $v\in N(\{a,b\})$, $v\cap a, v\cap b \neq \phi$ in $E$. Therefore there is an $a_i \in a$ and $b_j \in b$ such that the pair $ \{a_i,b_j\} \subset x$ in $E$. The number of such pairs is $k^2$ and given that each pair can appear in at most $p$ edges, the result follows. See Figure~\ref{forb1} and Figure~\ref{forb11}.
\end{proof}

\begin{figure}[h]
\centering
\includegraphics[scale=0.8]{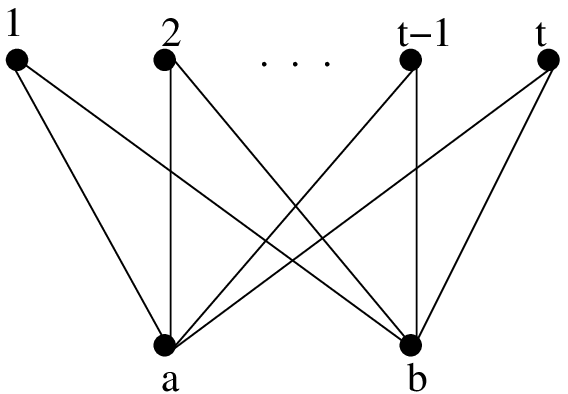}
\caption{}
\label{forb1}
\end{figure}%
\begin{figure}[h]
\centering
\includegraphics[scale=0.7]{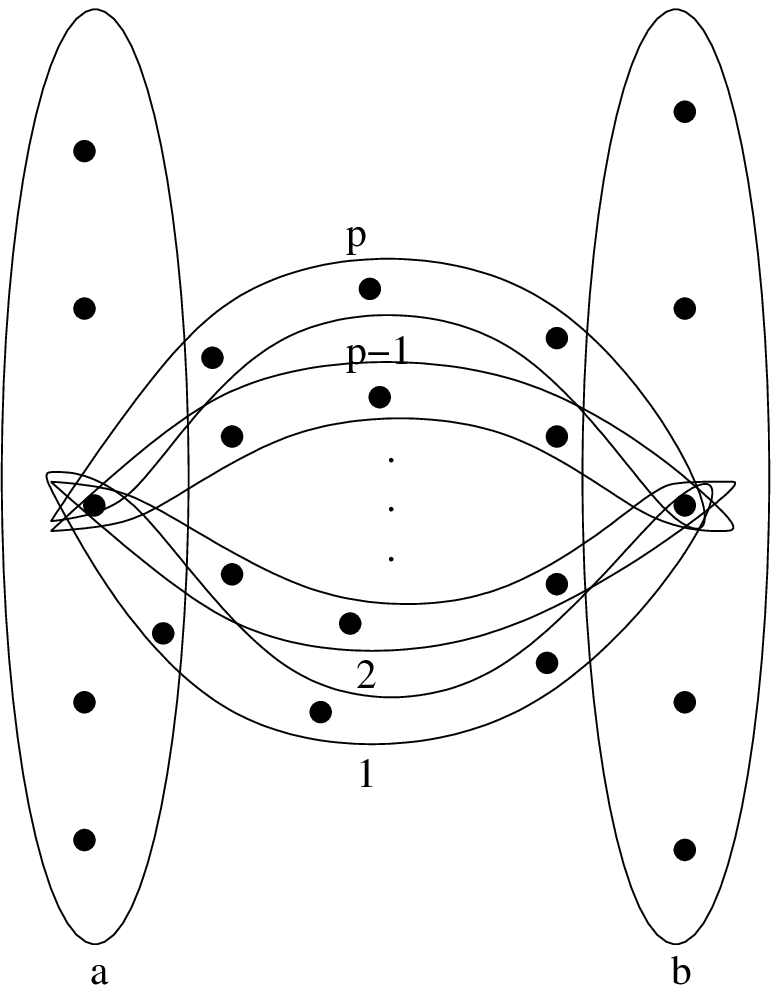}
\caption{}
\label{forb11}
\end{figure}

Next we prove the following lemma, which says that  ``large cliques'' (Figure~\ref{lg_clique_[1]})
in the line graph come from structures like Figure~\ref{lg_clique_[2]} in the hypergraph.

\begin{lemma}\label{uniqie}
Let $G \in L_k^{(p)}$ such that $G=L(H)$, $H=(X,E)$. If  $K$ is a clique of size at least $pk^2+(p-2)k+2$ in $G$, 
then there is an $x \in X$ such that $x \in v $ for every $v \in K$. 
\end{lemma}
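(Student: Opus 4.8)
The plan is a two-step pigeonhole argument: first localise a ``popular'' vertex inside one fixed edge of the clique, then use the multiplicity bound $\Delta_2(H)\le p$ to promote that popular vertex to a common vertex of the whole clique. This is the hypergraph analogue of the classical fact that, in the line graph of a graph, a clique of size at least $4$ must come from a ``star'' of edges through one vertex rather than from a ``triangle''. Throughout, identify each $v\in K$ with the corresponding hyperedge in $E$; since $K$ is a clique, these hyperedges pairwise intersect in $H$. Set $N:=|K|$.

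First I would fix an arbitrary $v_0\in K$. Each of the remaining $N-1$ edges of $K$ meets $v_0$, hence contains at least one of the $k$ vertices of $v_0$; by pigeonhole some vertex $x\in v_0$ lies in at least $\lceil (N-1)/k\rceil$ of them, and adding $v_0$ itself gives $|K_x|\ge \lceil (N-1)/k\rceil+1$, where $K_x:=\{v\in K:\ x\in v\}$. Since $N\ge pk^2+(p-2)k+2$ one has $N-1> k(pk-1)$, so $\lceil (N-1)/k\rceil\ge pk$ and therefore $|K_x|\ge pk+1$.

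Next I would show $K_x=K$, which is the assertion of the lemma. Suppose not and pick $v^*\in K$ with $x\notin v^*$, say $v^*=\{y_1,\dots,y_k\}$ (so $y_i\ne x$ for all $i$). Every $v\in K_x$ is adjacent to $v^*$ in $G=L(H)$, hence $v\cap v^*\ne\emptyset$, so $v$ contains some $y_i$; together with $x\in v$ this yields $\{x,y_i\}\subseteq v$. Thus each edge of $K_x$ contains one of the $k$ pairs $\{x,y_1\},\dots,\{x,y_k\}$, and each such pair lies in at most $\Delta_2(H)\le p$ edges of $H$, so $|K_x|\le pk$ — contradicting $|K_x|\ge pk+1$. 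Hence $x\in v$ for every $v\in K$.

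The computations are routine; the one genuinely substantive point is the second step, where the bound on $|K_x|$ must be extracted from pair-degrees at $x$ rather than from vertex-degrees. This is exactly where the hypothesis $G\in L_k^{(p)}$, i.e. $\Delta_2(H)\le p$, enters, and without it the statement fails (for instance, when $k-1$ is a prime power, the lines of a projective plane of order $k-1$ are $k^2-k+1$ pairwise-intersecting $k$-sets with no common point). The rest is bookkeeping: checking that the threshold $pk^2+(p-2)k+2$ is enough to force $|K_x|\ge pk+1$ in the first step — which it is, with essentially no slack at $p=1$ and with room to spare for $p\ge2$.
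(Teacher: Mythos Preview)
Your proof is correct and follows exactly the same two-step pigeonhole argument as the paper: first find a vertex $x$ of a fixed edge lying in at least $pk+1$ members of $K$, then use $\Delta_2(H)\le p$ against a hypothetical $v^*\in K$ missing $x$ to cap that count at $pk$. Your pigeonhole bookkeeping (counting the $N-1$ other edges and then adding $v_0$ back) is in fact more carefully spelled out than the paper's, and correctly shows the threshold is tight at $p=1$.
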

\begin{proof}
Let $x=\{ x_1,x_2,\ldots ,x_k \} \in E$ be a vertex in $K$. Since $K$ is a clique, every vertex in $K$ 
contains at least one element of $x$. Now using a pigeon hole argument, given that $|V(K)|\geq pk^2+(p-2)k+2$ and $p \geq 1$, 
it follows that there is an $x_t \in x$ such that $x_t$ is in at least $kp+1$ vertices of $K$ (including $x$). 
Let $L=\{v_1,v_2,\ldots v_{kp+1}\}$ be a set of $kp+1$ vertices such that $x_t \in v_i$ for every $1\leq i \leq kp+1$. 
Let $y = \{y_1,y_2,\ldots ,y_k\}  \in V(K) \setminus  L$ such that $x_t \not\in y$. 
Since $y$ is adjacent to every vertex in $L$, $y\cap v_i \neq \phi$ for every $i$. 
Now for each $i$, the pair $x_t,y_i$ can appear in at most $p$ edges and the number of such pairs is $k$.
Hence it follows that a pair $x_t,y_l$ appears at least $p+1$ vertices of $L$.   
\end{proof}
\begin{figure}[h]
\centering
\begin{subfigure}{.5\textwidth}
\centering
\includegraphics[scale=0.8]{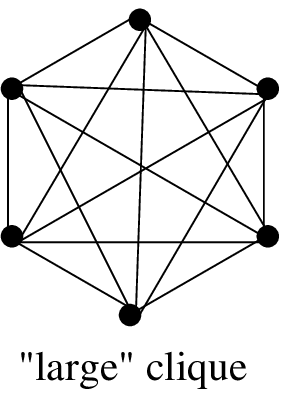}
\caption{}
\label{lg_clique_[1]}
\end{subfigure}%
\begin{subfigure}{.5\textwidth}
\centering
\includegraphics[scale=0.5]{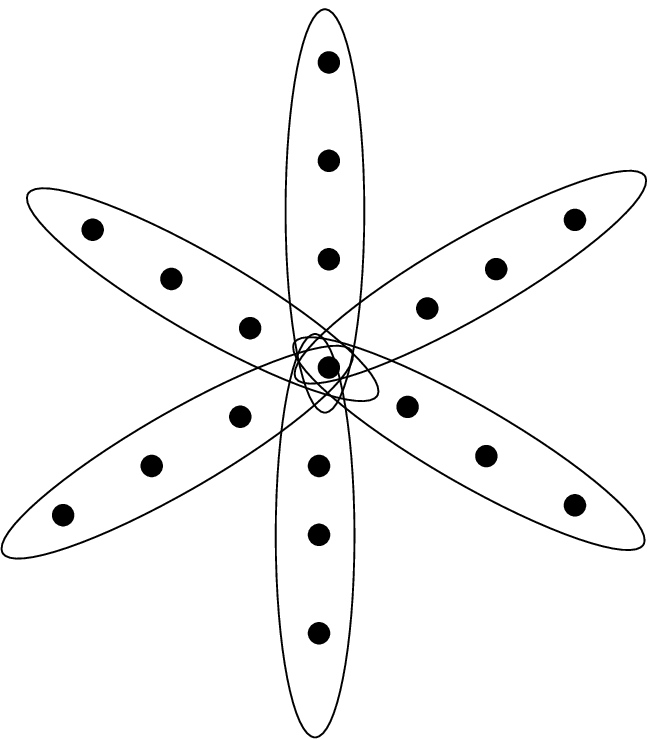}
\caption{}
\label{lg_clique_[2]}
\end{subfigure}
\caption{}
\label{lg_clique}
\end{figure}

\begin{lemma}\label{commonality}
Let $G \in L_k^{(p)}$ and $K$ be a maximal clique of size at least $pk^2+(p-2)k+2$. If $v \in V(G)$  
such that $v\not\in K$, then $v$ can be adjacent to at most $pk$ vertices in $K$.
\end{lemma}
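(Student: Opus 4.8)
The plan is to use Lemma~\ref{uniqie} to pin down the structure of the large maximal clique $K$, then argue that a vertex $v \notin K$ that meets too many members of $K$ forces a violation of either the size of $K$, its maximality, or the multiplicity bound $\Delta_2(H) \le p$.

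First I would apply Lemma~\ref{uniqie}: since $K$ is a clique of size at least $pk^2+(p-2)k+2$, there is a vertex $x_0 \in X$ with $x_0 \in v$ for every $v \in K$. Now suppose $v \notin K$, viewed as a hyperedge $v = \{v_1,\dots,v_k\} \in E$, is adjacent in $G$ to a set $K_v \subseteq K$ of vertices. For each $w \in K_v$ we have $v \cap w \neq \phi$, so $v$ shares some element $v_j$ with $w$. Partition $K_v$ according to which coordinate $v_j \in v$ lies in $w$ (choosing one such coordinate for each $w$); this splits $K_v$ into at most $k$ parts. I would then show each part has size at most $p$: if some $v_j$ belonged to more than $p$ members of $K$, then since every member of $K$ also contains $x_0$, the pair $\{x_0, v_j\}$ would lie in more than $p$ edges of $H$ (note $v_j \neq x_0$, since $v \notin K$ means $x_0 \notin v$ — wait, this needs care, see below), contradicting $\Delta_2(H) \le p$. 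Summing over the at most $k$ coordinates gives $|K_v| \le pk$, which is the claim.

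The subtle point — and the step I expect to be the main obstacle — is handling the case $x_0 \in v$. A priori $v \notin K$ does not forbid $x_0 \in v$. If $x_0 \in v$, then $v$ is adjacent to every member of $K$ (they all share $x_0$), so $K \cup \{v\}$ is a clique, contradicting the maximality of $K$; hence this case cannot occur and we may assume $x_0 \notin v$, so every $v_j \neq x_0$ and the pair-degree argument above goes through cleanly. I should state this reduction explicitly at the start of the proof. One further check: in forming the pair $\{x_0, v_j\}$ I need the $w \in K$ in question to genuinely contain both $x_0$ and $v_j$ as a \emph{pair} inside a single hyperedge — this is immediate since $w$ is itself a hyperedge containing $x_0$ (by Lemma~\ref{uniqie}) and containing $v_j$ (by choice of the coordinate in the partition).

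To summarize the order of steps: (1) invoke Lemma~\ref{uniqie} to get the common vertex $x_0$ of $K$; (2) observe that maximality of $K$ forces $x_0 \notin v$ for any $v \notin K$; (3) for $v \notin K$, assign to each neighbour $w \in K$ of $v$ a coordinate $j = j(w)$ with $v_j \in w$, partitioning the neighbourhood into $\le k$ classes; (4) bound each class by $p$ using $\Delta_2(H) \le p$ applied to the pair $\{x_0, v_j\}$; (5) conclude $v$ has at most $pk$ neighbours in $K$. The routine calculations are only the pigeonhole counting in steps (3)–(4), so no grinding is required.
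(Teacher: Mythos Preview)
Your proposal is correct and follows essentially the same approach as the paper: invoke Lemma~\ref{uniqie} to get the common vertex $x_0$ of $K$, rule out $x_0 \in v$ by maximality of $K$, and then use the pair-degree bound $\Delta_2(H)\le p$ on each pair $\{x_0,v_j\}$ to cap the number of neighbours of $v$ in $K$ by $pk$. The paper phrases the same pigeonhole step as a proof by contradiction (assuming $pk+1$ neighbours and deducing $x_0\in v$ ``by a similar argument as in the previous proof''), whereas you do it directly, but the content is identical.
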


Figure~\ref{forb2} illustrates this lemma.

\begin{proof}
Let $G \in L_k^{(p)}$ such that $G=L(H)$, $H=(X,E)$. Since $|V(K)|\geq pk^2+(p-2)k+2$, 
by Lemma \ref{uniqie} there is an $x \in X$ such that $x\in v$ for every $v \in K$. 
Suppose $y \not\in K $ is adjacent to $pk+1$ vertices in $K$ then using a similar argument as in the previous
proof it follows that $x \in y$. This contradicts the maximality of $K$. 
\end{proof}

\begin{figure}[h]
\centering
\includegraphics[scale=0.6]{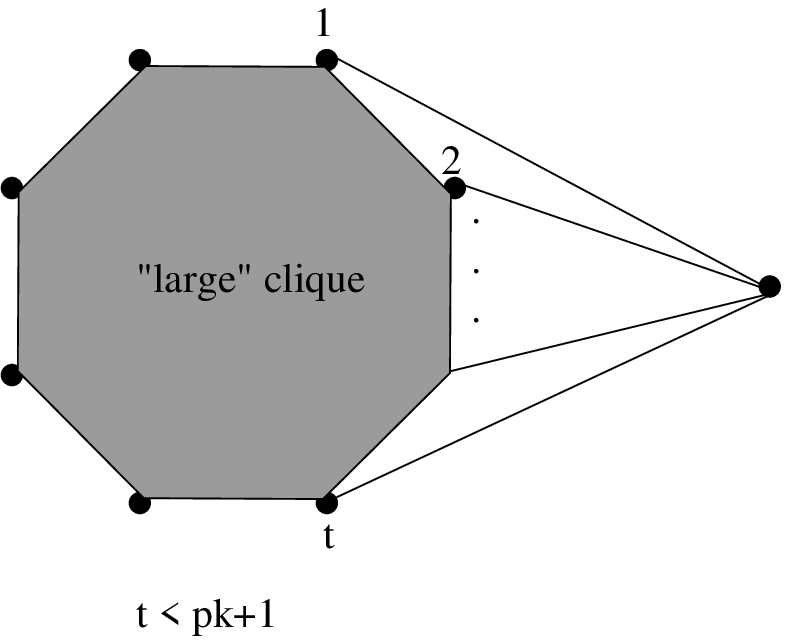}
\caption{}
\label{forb2}
\end{figure}

Next we define the following three forbidden families. 

\begin{enumerate}
\item $\mathcal{F}_1(p,k)$ denote the set of graphs $G$ of order $pk^2+3$ with two 
non-adjacent vertices $u,v$ such that $N(\{ v,w\}) =V(G) \setminus \{v,w \}$,
\medskip

\begin{figure}\centering
 \includegraphics[scale=0.7]{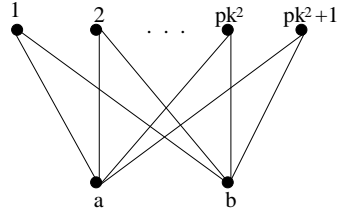}
 \caption{$\mathcal{F}_1(p,k)$}
\end{figure}

\item $\mathcal{F}_2(p,k)$ denote the set of graphs $G$ of order $pk^2+(p-2)k+3$ 
containing a maximal clique $K$ of size $pk^2+(p-2)k+2$ and a $v\not\in K$ such that $v$ is adjacent to 
at least $pk+1$  vertices of $K$,
\medskip
\begin{figure}[h]\centering
 \includegraphics[scale=0.6]{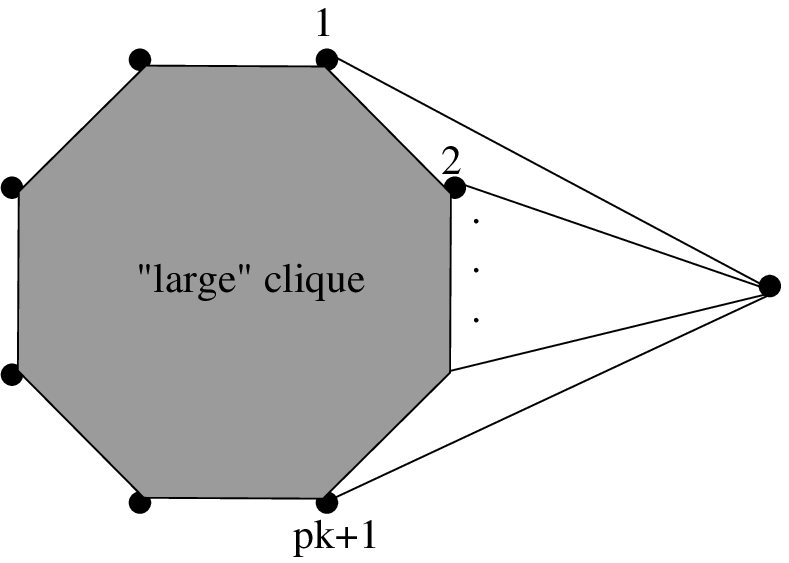}
\caption{ $\mathcal{F}_2(p,k)$}
 \end{figure}

\item $\mathcal{F}_3(p,k)$ denote the set of graphs $G$ of order less than $2(pk^2+(p-2)k+2)$ 
containing a  pair of distinct maximal cliques $K_1,K_2$ of size $pk^2+(p-2)k+2$ such that $|V(K_1)\cap V(K_2)| \geq p+1$.
\medskip

\begin{figure}[h]
\centering  \includegraphics[scale=0.65]{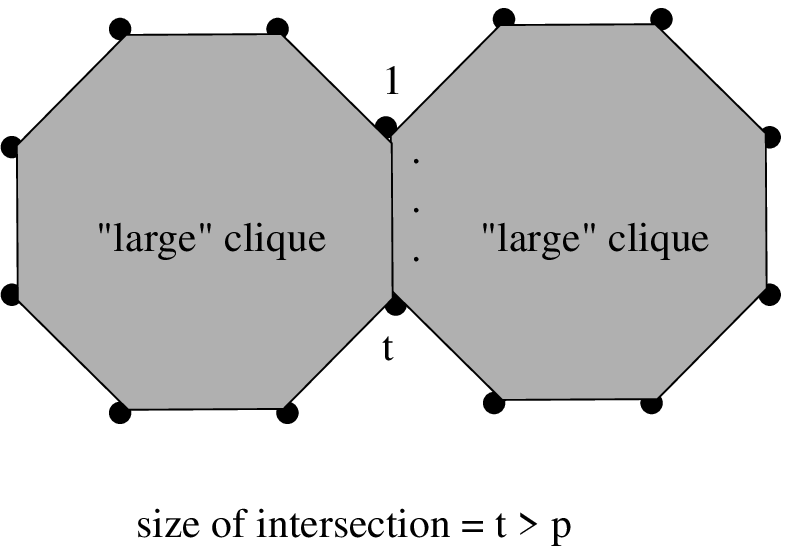}
\caption{ $\mathcal{F}_3(p,k)$}
\end{figure}
\end{enumerate}

Let $\mathcal{F}(p,k)=\mathcal{F}_1(p,k) \bigcup \mathcal{F}_2(p,k)
 \bigcup \mathcal{F}_3(p,k) \bigcup \{k+1\mbox{-claw}\}$.

\begin{lemma}\label{necessary}
If $G \in L_k^{(p)}$, then $G$ does not contain an induced subgraph from the set $\mathcal{F}(p,k)$. 
\end{lemma}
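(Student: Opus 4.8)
The plan is to show that each of the four obstructions in $\mathcal{F}(p,k)$ cannot appear as an induced subgraph of a line graph $G = L(H)$ with $H$ being $k$-uniform and $\Delta_2(H) \le p$; since an induced subgraph of such a line graph is itself a line graph of a $k$-uniform hypergraph with multiplicity at most $p$ (just restrict $H$ to the edges corresponding to the chosen vertices), it suffices to treat each family separately and derive a contradiction.

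First I would handle the $(k+1)$-claw: this is immediate from the first lemma of Section 2, which already states that no graph in $L_k^{(p)}$ contains a $(k+1)$-claw. For $\mathcal{F}_1(p,k)$: suppose $G \in L_k^{(p)}$ has order $pk^2+3$ with non-adjacent vertices $u,v$ (I'll read the intended statement as $N(\{u,v\}) = V(G)\setminus\{u,v\}$) so that every other vertex is adjacent to both $u$ and $v$. Then $|N(\{u,v\})| = pk^2+3-2 = pk^2+1 > pk^2$, which directly contradicts Lemma~\ref{nonadj}. For $\mathcal{F}_2(p,k)$: if $G\in L_k^{(p)}$ contains a maximal clique $K$ of size $pk^2+(p-2)k+2$ and a vertex $v\notin K$ adjacent to at least $pk+1$ vertices of $K$, this contradicts Lemma~\ref{commonality}, which says any vertex outside such a maximal clique is adjacent to at most $pk$ of its vertices.

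The main work — and the step I expect to be the genuine obstacle — is $\mathcal{F}_3(p,k)$. Suppose $G = L(H) \in L_k^{(p)}$ contains two distinct maximal cliques $K_1, K_2$, each of size $pk^2+(p-2)k+2$, with $|V(K_1)\cap V(K_2)| \ge p+1$. Since each $K_i$ has size at least $pk^2+(p-2)k+2$, Lemma~\ref{uniqie} gives vertices $x_1, x_2 \in X$ with $x_i \in v$ for all $v \in K_i$. If $x_1 = x_2$, then $K_1 \cup K_2$ is a set of hyperedges all containing the common vertex $x_1$, hence pairwise intersecting, hence a clique in $G$; by maximality this forces $K_1 = K_2$, a contradiction. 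So $x_1 \ne x_2$. Now every vertex $w \in V(K_1)\cap V(K_2)$ is a hyperedge containing both $x_1$ and $x_2$, i.e. containing the pair $\{x_1,x_2\}$; but $d_H(\{x_1,x_2\}) \le \Delta_2(H) \le p$, so there are at most $p$ such hyperedges, contradicting $|V(K_1)\cap V(K_2)| \ge p+1$.

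I should double-check the one subtlety in the $\mathcal{F}_3$ argument: Lemma~\ref{uniqie} is applied to $K_1$ and $K_2$ as cliques in $G$, which is legitimate since $G \in L_k^{(p)}$ and each clique is large enough; the maximality hypothesis is used only to rule out $x_1 = x_2$. One caveat is that Lemma~\ref{uniqie} guarantees \emph{some} common vertex $x_i$ but not uniqueness, so when $x_1 = x_2$ I am choosing a common vertex for each of $K_1, K_2$ and using that it can be taken equal; strictly, in that case I pick $x_1$ common to $K_1$, observe $x_1$ is common to $K_2$ as well is not automatic — instead I argue directly that if the sets of ``common vertices'' of $K_1$ and of $K_2$ share an element then that element witnesses $K_1\cup K_2$ being a clique, and otherwise $x_1\ne x_2$ and the pair-degree bound applies. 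Either way the contradiction stands, and assembling the four cases completes the proof of Lemma~\ref{necessary}.
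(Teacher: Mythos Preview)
Your proof is correct and follows the same strategy as the paper: rule out each of the four forbidden families by invoking Lemmas~2.1--2.4. The one organizational difference is that you begin by observing $L_k^{(p)}$ is hereditary and then work directly inside the forbidden induced subgraph, whereas the paper stays in the ambient graph $G$ and, for $\mathcal{F}_2$ and $\mathcal{F}_3$, extends the cliques $K,K_1,K_2$ (maximal only in the induced subgraph) to maximal cliques $K',K_1',K_2'$ in $G$ before applying Lemmas~\ref{uniqie} and~\ref{commonality}; your route avoids that extension step. Your final caveat is unnecessary: once Lemma~\ref{uniqie} hands you \emph{some} $x_1$ for $K_1$ and \emph{some} $x_2$ for $K_2$, either $x_1=x_2$ (so every edge of $K_1\cup K_2$ contains $x_1$, forcing $K_1=K_2$ by maximality) or $x_1\neq x_2$ and the pair-degree bound finishes it --- exactly as you first wrote.
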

\begin{proof}
Suppose $G_1 \in \mathcal{F}_1(p,k)$ is an induced subgraph of $G$. Let  $x,y$ be 
non-adjacent vertices in $G_1$ such that 
$N_{G_1}(\{x,y \})=V(G_1)\setminus \{x,y \}.$ Then it 
follows that $$|N_G(\{x,y \})|\geq pk^2+1.$$ Since $xy \not\in E(G)$, this contradicts Lemma \ref{nonadj}. 

Let $G_2 \in \mathcal{F}_2(p,k)$ be an induced subgraph of $G$. Let $K$ be a maximal clique of size 
$pk^2+(p-2)k+2$ in $G_2$ and let $w \in V(G_2) \setminus K$ such that $w$ is adjacent to at least $pk+1$  
vertices of $K$. There exists a maximal clique $K'$ in $G$ such that $K \subset K'$. Since $|K'| \geq pk^2+(p-2)k+2$, 
by Lemma \ref{uniqie} there exists a $x \in X$ such that $x \in v$ for every $v \in K'$. 
Now since $w \not\in K$ and both $K$ and $K'$ are maximal cliques, it follows that $w \not\in K'$. Further $w$ is adjacent 
to at least $pk^2+1$ vertices of $K'$ this contradicts Lemma \ref{commonality}. 

Let $G_3 \in \mathcal{F}_3(p,k)$ be an induced subgraph of $G$. Let $K_1,K_2$ be maximal cliques in $G_3$ of 
order $pk^2+(p-2)k+2$ such that $|V(K_1)\cap V(K_2)| \geq kp+1$. There exists maximal cliques $K_1', K_2'$ in $G$ 
such that $K_1 \subset K_1'$ and $K_2 \subset K_2'$. By Lemma \ref{uniqie} there exists $x_1, x_2 \in X$ such 
that $x_1 \in v$ for every $v \in K_1$ and $x_2 \in v$ for every $v \in K_2$. Sine $K_1', K_2'$ are distinct maximal 
cliques in $G$ it follows that $x_1 \neq x_2$. Therefore if $|V(K_1') \cap V(K_2')| \geq |V(K_1) \cap V(K_2) )| \geq p+1$ 
it follows that the pair $x_1,x_2$ appears in at least $p+1$ distinct vertices in $G$ which is a 
contradiction (see the following figure). 
\end{proof}

\centerline{ \includegraphics[scale=0.65]{forbidden_3.eps}\hspace{10mm} \includegraphics[scale=0.65]{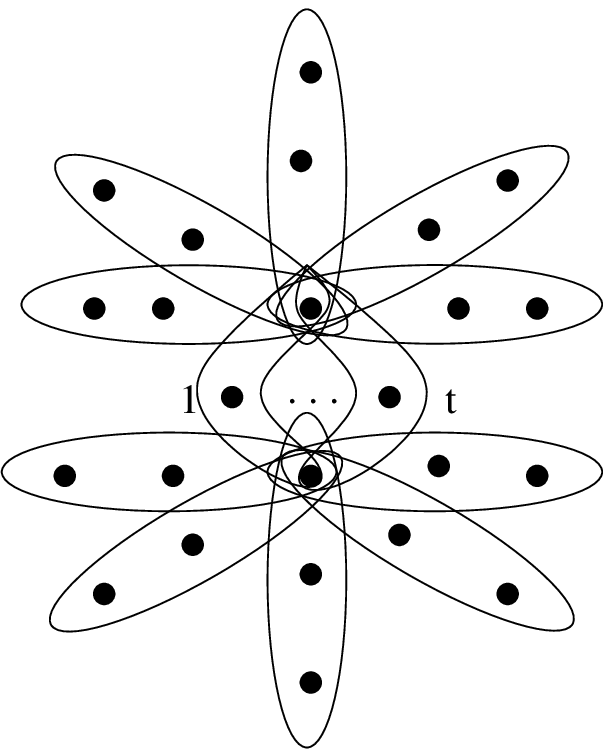}}

To show the other direction of the Theorem, we use the idea of ``clique partition'' for line graphs 
due to Krausz. We present a characterization of the members of the family $L_k^{(p)}$ which is a 
generalization of the criterion described in Berge \cite{berge}. 

\begin{proposition}\label{prop}
A graph $G \in L_k^{(p)}$ if and only if there is a collection of cliques  $\mathcal{K}:K_1,K_2,\ldots , K_r$ of $G$ such that;
\begin{enumerate}

    \item[(i)] Every edge belongs to at least one member of $\mathcal{K}$.
    
    \item[(ii)] Every vertex belongs to at most $k$ members of $\mathcal{K}$.
    
    \item[(iii)] If $K_i,K_j$ are distinct elements of  $\mathcal{K}$, then $|V(K_i) \cap V(K_j)| \leq p$.
\end{enumerate}

\end{proposition}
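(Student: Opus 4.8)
The plan is to use the classical Krausz-type dictionary between intersection representations and clique covers, with the cliques playing the role of the ``stars'' of the hypergraph vertices.

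For the forward implication, assume $G = L(H)$ for a $k$-uniform hypergraph $H = (X,E)$ with $\Delta_2(H) \le p$. For each $x \in X$ set $K_x := \{\, e \in E : x \in e \,\}$, viewed as a subset of $E = V(G)$; since any two of its members share the vertex $x$ of $H$, they are adjacent in $G$, so $K_x$ is a clique. Conditions (i)--(iii) then follow directly: an edge $\{e,e'\}$ of $G$ comes with some $x \in e \cap e'$, so it lies in $K_x$; a vertex $e$ of $G$ lies in exactly those $K_x$ with $x \in e$, hence in at most $k$ members of $\mathcal{K}$; and for $x \ne y$ one has $|V(K_x) \cap V(K_y)| = d_H(\{x,y\}) \le p$.

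For the converse, suppose $\mathcal K = K_1,\dots,K_r$ is a family of cliques of $G$ satisfying (i)--(iii). I would build $H$ on the vertex set $X := \{1,\dots,r\} \cup D$, where $D$ is a reservoir of fresh ``private'' vertices, each to be used in only one edge. To every $v \in V(G)$ associate $S_v := \{\, i : v \in K_i \,\}$, which has size at most $k$ by (ii), and let $e_v := S_v$ together with $k - |S_v|$ brand-new vertices of $D$ (distinct ones for distinct $v$), so that $|e_v| = k$; set $E := \{\, e_v : v \in V(G)\,\}$. Then $H$ is $k$-uniform. For $\Delta_2(H) \le p$ I would argue by cases on a pair of vertices of $H$: a pair $\{i,j\}$ of clique-indices lies in $e_v$ exactly when $v \in V(K_i) \cap V(K_j)$, hence in at most $p$ edges by (iii), while a pair meeting $D$ lies in at most one edge and $p \ge 1$.

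It remains to check $L(H) = G$. For distinct $v,w \in V(G)$ the private vertices contribute nothing to $e_v \cap e_w$, so $e_v \cap e_w \ne \emptyset$ iff some $K_i$ contains both $v$ and $w$; that forces $vw \in E(G)$ since $K_i$ is a clique, and conversely if $vw \in E(G)$ then condition (i) puts this edge into some $K_i$, so $i \in e_v \cap e_w$. The one genuinely delicate point, and the step I expect to need the most care, is that $v \mapsto e_v$ need not be injective: two vertices of $G$ that lie in exactly the same $k$ cliques leave no room for private vertices and are sent to the same $k$-set. I would resolve this by allowing $H$ to be a multi-hypergraph, so that the several occurrences of a repeated edge are genuine, distinct vertices of $L(H)$; one then checks this costs nothing, since if $m$ vertices of $G$ share an index set $\{i_1,\dots,i_k\}$ they all lie in $V(K_{i_1}) \cap V(K_{i_2})$, forcing $m \le p$ by (iii), so the multiplicity-$m$ edge still respects pair-degree $p$, and the clique of twins it creates in $L(H)$ matches exactly the corresponding clique of twins in $G$.
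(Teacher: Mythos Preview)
Your proof is correct and follows essentially the same Krausz-type construction as the paper: the forward direction is identical, and in the converse your reservoir $D$ of private padding vertices plays exactly the role of the paper's padding of $\mathcal{K}$ by $k-g(v)$ copies of the singleton clique $\{v\}$. You are in fact more careful than the paper about the possible non-injectivity of $v\mapsto e_v$ when $|S_v|=k$, a point the paper's proof passes over in silence.
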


\begin{proof}
Let $G$ be a graph and $\mathcal{K}:K_1,K_2,\ldots ,K_r$ be a collection of cliques in $G$ 
satisfying (i)-(iii). For every $v \in V(G)$, let $g(v)$ denote the number of cliques in $\mathcal{K}$ 
containing the vertex $v$. From (ii) $g(v)\leq k$. Construct $\mathcal{K}'$ from $\mathcal{K}$ by 
adding $k-g(v)$ copies of $\{ v\}$, for 
every $v \in V$.  For $x \in V$, define $E_x=\{K \in \mathcal{K}'  \ | \ x \in K\} $. Let $H=(X,E)$ be the 
hypergraph with $X=\mathcal{K}'$ and $E=\{ E_x \ | \ x\in V\}$. Since each $E_x$ has cardinality $k$, it 
follows that $H$ is $k$-uniform. Suppose $ab$ is an edge in $G$, from (i) the edge $ab$ appears in at least one 
element of $\mathcal{K}$, therefore $E_a \cap E_b \neq \phi$. Finally for every distinct 
pair $K_1,K_2 \in\mathcal{K}'$, $|V(K_1) wcap V(K_2)| \leq p$, therefore the pair $K_1,K_2$ appear 
in at most $p$ edges of $H$. Hence $\Delta_2(H) \leq p$. \par \medskip

\noindent For the converse, let $H=(X,E)$ be a $k$-uniform hypergraph with $\Delta_2(H) \leq p$. 
For every $x \in X$ define $E_x = \{e\in E \ | \ x\in E \}$. Let $\mathcal{K}$ be the collection 
of all such sets $E_x$ which are non-empty. Let $G$ be the intersection graph of $H$. It is clear 
that $\mathcal{K}$ defines a collection of cliques in $G$. We shall show that this collection satisfies (i)-(iii).  
Let $e_1,e_2 \in E$  such that $e_1e_2$ is an edge in $G$. Hence $e_1 \cap e_2 \neq \phi$. Let $a \in  e_1 \cap e_2$, 
then the edge $e_1e_2$ appears in the clique $E_a \in \mathcal{K}$ this proves (i). 
To show (ii), let $x\in V(G)$. Let $\{x_1,\ldots ,x_k \} \in E$ represent the edge corresponding to $x$. 
Then the vertex $x$ appears in exactly $k$ cliques $E_{x_1}, \ldots ,E_{x_k} \in \mathcal{K}$. 
Finally if $e_1,e_2 \in E_a \cap E_b$, then ${a,b} \in e_1 \cap e_2$ and since the pair $a,b$ can 
occur in at most $p$ elements of $E$, it follows that $|E_a \cap E_b|\leq p$ this proves (iii). 
\end{proof}

Now we complete the proof of Theorem \ref{main}. For Lemmas \ref{lem1} - \ref{lem3} we
will assume that $G$ is a graph with at least one edge which has no induced subgraph
isomorphic to a member of $\mathcal{F}(p,k)$ and the minimum edge-degree of $G$ is at least $f(k,p) =pk^3+(p-3)k+1$. 
We then show that $G \in L_k^{(p)}$. Define $\mathcal{K}$ to be the set of all maximal cliques in $G$ of size at least $pk^2+(p-2)k+2$. 

\begin{lemma}\label{lem1}
Every edge in $G$ occurs in a clique of size at least $pk^2+(p-2)k+2$. 
\end{lemma}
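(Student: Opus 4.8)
The plan is to reduce the lemma to finding a large clique inside the common neighbourhood of the given edge, and then to build that clique, using the forbidden families only through their two simplest consequences (bounded independence number in neighbourhoods, and the pair‑degree bound of Lemma~\ref{nonadj}).

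First I would set up the reduction. Fix an edge $ab$ of $G$; by hypothesis its edge‑degree is at least $f(k,p)=pk^3+(p-3)k+1$, i.e.\ $|N(\{a,b\})|\ge f(k,p)$, and I write $T:=N(\{a,b\})$. Since $ab\in E(G)$ and every vertex of $T$ is adjacent to both $a$ and $b$, for any clique $C\subseteq T$ the set $C\cup\{a,b\}$ is a clique of $G$ of size $|C|+2$. Hence it suffices to produce a clique of $G[T]$ of size at least $pk^2+(p-2)k$. Two properties of $G[T]$ will be used repeatedly. (i) Its independence number satisfies $\alpha(G[T])\le k$: an independent set $\{u_1,\dots,u_{k+1}\}\subseteq T$ together with $a$ would be a $(k+1)$‑claw $\langle a;u_1,\dots,u_{k+1}\rangle$, which is forbidden. (ii) By Lemma~\ref{nonadj}, any two non‑adjacent vertices of $G$ have at most $pk^2$ common neighbours; in particular, if $C$ is a clique of $G$ with $|C|>pk^2$ then $N_G(C)$ is a clique (two vertices of $N_G(C)$ would otherwise be a non‑adjacent pair with $\ge|C|>pk^2$ common neighbours), so $C\cup N_G(C)$ is again a clique. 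Restricting to $T$ and discarding $a,b$ gives the sharper statement that two non‑adjacent vertices of $G[T]$ have at most $pk^2-2$ common neighbours inside $T$.

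The core of the argument is a structural decomposition: I would show that all but a bounded exceptional set of the vertices of $T$ can be covered by at most $k-1$ cliques of $G[T]$. The natural route is induction on $k$. One picks a vertex $w\in T$ correctly — e.g.\ of smallest degree in $G[T]$, or a vertex lying in a largest clique of $G[T]$ — and passes to $T':=T\setminus N_{G[T]}[w]$, the non‑neighbours of $w$ in $T$; adjoining $w$ to any independent set of $T'$ shows $\alpha(G[T'])\le k-1$, so by induction $T'$ is covered by at most $k-2$ cliques plus a bounded exceptional set, and one argues that $N_{G[T]}[w]$ is itself, up to a bounded exceptional set, a single clique. Here the pair‑degree bound (ii), together with the fact that \emph{every} edge of $G$ — not just $ab$ — has edge‑degree at least $f(k,p)$, is what keeps the exceptional sets small and, crucially, forces the number of cliques down to $k-1$ rather than $k$: the adjacency of $a$ and $b$ makes $T$ behave like the ``link'' of a common vertex of two edges, which lies inside a proper sub‑collection of the $k$ clique‑pieces surrounding $a$, and it is exactly this extra ``$-1$'' that the arithmetic below needs. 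Making this precise — pinning down the right choice of $w$, and turning the global hypotheses (the forbidden families, the edge‑degree bound) into the statement that $N_{G[T]}[w]$ is almost a clique — is the step I expect to be the main obstacle; it is the only genuinely non‑routine part, since a graph with $\alpha\le k$ and clique number $\le pk^2+(p-2)k+1$ but \emph{without} further hypotheses can be enormous (Ramsey‑type bounds are far too weak), so the pair‑degree bound and the edge‑degree hypothesis must be used essentially.

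Finally, granting a decomposition $T=D_1\cup\dots\cup D_m\cup J$ with $m\le k-1$, each $D_i$ a clique, and $|J|$ bounded, one gets $\sum_{i}|D_i|\ge|D_1\cup\dots\cup D_m|=|T|-|J|\ge f(k,p)-|J|$, so the largest $D_i$ has at least $(f(k,p)-|J|)/(k-1)$ vertices. A direct computation shows
\[
\frac{pk^3+(p-3)k+1-|J|}{k-1}\ \ge\ pk^2+(p-2)k \qquad\text{whenever}\qquad |J|\le 2k^2+(2p-5)k+1 ,
\]
which the bounds on the exceptional set comfortably guarantee. Thus $G[T]$ contains a clique of size at least $pk^2+(p-2)k$, and adjoining $a$ and $b$ gives a clique of size at least $pk^2+(p-2)k+2$ containing the edge $ab$, as required.
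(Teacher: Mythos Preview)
Your plan diverges from the paper's at the first substantive step and leaves the real difficulty unresolved. You work inside $T=N(\{a,b\})$ and try to cover $T$ by at most $k-1$ cliques plus a bounded exceptional set, then pigeonhole. You correctly flag the covering step as ``the main obstacle'', but the sketch you give does not close it: the inductive move hinges on showing that $N_{G[T]}[w]$ is, up to a bounded set, a single clique, and nothing in the hypotheses forces this. In the intended hypergraph model, if $w$ corresponds to an edge $e_w$, then $N_{G[T]}(w)$ consists of edges meeting $e_w$ \emph{and} $e_a$ \emph{and} $e_b$; there is no reason these should (nearly) share a common vertex, so $N_{G[T]}[w]$ can genuinely split across several of the large cliques. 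Your heuristic for getting $k-1$ rather than $k$ (``the adjacency of $a$ and $b$ makes $T$ behave like the link of a common vertex'') is exactly the kind of hypergraph intuition that is unavailable at this point of the argument --- the hypergraph has not been built yet --- and your arithmetic shows that $k$ cliques would \emph{not} suffice (e.g.\ $p=3,k=3$), so the gap is fatal as stated.

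The paper avoids the decomposition entirely. It works not in $N(\{a,b\})$ but around a single endpoint: take a \emph{maximal claw} $\langle x; w_1,\dots,w_r,v_1,\dots,v_s\rangle$ at $x$ (built from a maximal claw through $y$), set $z=v_s$, and look at those vertices of $N(\{x,z\})$ that are adjacent to none of the other tips. Maximality of the claw forces this set to be a clique containing $x$, $y$; its size is at least $f(k,p)-(k-1)(pk^2-1)=pk^2+(p-2)k$, giving the desired clique of size $pk^2+(p-2)k+2$ directly. In effect the tips of a maximal claw at $x$ already realise the clique decomposition you were seeking --- in $N(x)$, not in $N(\{a,b\})$ --- and the paper simply reads off the piece through $y$. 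If you want to salvage your outline, replace the induction on $\alpha$ by this claw construction; that is where the missing ``$-1$'' actually comes from.
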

\begin{proof}
Let $x,y$ be an edge in $G$. Let $\langle x;y,w_1,w_2,\ldots ,w_ r \rangle $ be a maximal claw at $x$ containing $y$. \par\medskip

\noindent \textbf{Case 1.} $r>0$: Let $\langle x;w_1,w_2,\ldots ,w_ r,v_1,v_2,\ldots ,v_s \rangle $ be a maximal claw at $x$ 
containing  $\langle x;w_1,w_2,\ldots ,w_ r \rangle $ as a subclaw. If one of $v_i$ is $y$ then $s=1$. Since $G$ does not 
contain a $k+1$-claw $r+s \leq k$. Now let $v_s=z$. Consider $N(\{x,z\})$. From our assumption about the minimum edge degree 
in $G$, $|N(\{x,z \})| \geq f(p,k)= pk^3+(p-3)k+1$. Now  some vertices in $N(\{x,z \})$ may be adjacent to vertices in 
the set $\{w_1,w_2,\ldots w_r,v_1,v_2\ldots v_{s-1}\}$ we shall discard these. Since $G$ does not contain any induced 
subgraph from the set $\mathcal{F}_1(p,k) $, it 
follows that $|N(\{z,w_i \})| \leq pk^2$ and $|N(\{z,v_j \})| \leq pk^2$, $1 \leq i \leq r, \ 1\leq j \leq s-1$. 
Hence there are at least $f(p,k)-(k-1)(pk^2-1)-1=pk^2+(p-2)k-1$ (if $z=y$) or $f(p,k)-(k-1)(pk^2-1)=pk^2+(p-2)k$ (if $z\neq y$) 
vertices in $N(\{ x,z \})$ which are not adjacent to any vertex in the set  $\{w_1,w_2,\ldots w_r,v_1,v_2\ldots v_{s-1}\}$. 
Since $\langle x;w_1,w_2,\ldots ,w_ r,v_1,v_2,\ldots ,v_s \rangle $ is a maximal claw, 
these vertices together with $x, y$ and $z$ must form a clique. Hence we have constructed a 
clique with at least $pk^2+(p-2)k+2$ vertices. \par\medskip

\noindent \textbf{Case 2.} $r=0$: Suppose that $\langle x;y \rangle $ is the maximum 
size claw at $x$ in this case $N(x)$ is a clique of size at least $pk^3+(p-3)k+1$. 
Since $p\geq 1, \ k\geq 2$, it follows that  $|N(x)|\geq pk^3+(p-3)k+1 \geq pk^2+(p-2)k+2 $, we are 
through. Otherwise let $\langle x;y_1,y_2,\ldots y_r \rangle $ be a maximal claw at $x$. 
Then repeating the same argument as in case 1 for the claw  $\langle x;y_1,y_2,\ldots y_r \rangle $ we 
obtain a clique of size at least $pk^2+(p-2)k+2$ containing the edge $xy$. 
\end{proof}

\begin{lemma}\label{join}
If $K  \in \mathcal{K}$ and $x \not\in K$, then $x$ is joined to at most $pk$ vertices of $K$.
\end{lemma}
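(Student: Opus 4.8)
The plan is to argue by contradiction and exhibit a forbidden induced subgraph from $\mathcal{F}_2(p,k)$. This is the forbidden-subgraph counterpart of Lemma \ref{commonality}, whose proof used the hypergraph $H$; here only the exclusion of $\mathcal{F}_2(p,k)$ (together with the fact, built into the definition of $\mathcal{K}$, that $|K|\ge pk^2+(p-2)k+2$) is needed.

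Here are the steps I would carry out. Suppose $K\in\mathcal{K}$, $x\notin K$, and $x$ is adjacent to at least $pk+1$ vertices of $K$. First, since $K$ is a maximal clique of $G$ and $x\notin K$, the vertex $x$ has at least one non-neighbour $w$ in $K$. Next, I would select a subset $K_0\subseteq K$ of size exactly $pk^2+(p-2)k+2$ that contains $w$ together with at least $pk+1$ neighbours of $x$ in $K$; this is possible because $|K|\ge pk^2+(p-2)k+2$ and because $pk+1$ chosen neighbours plus $w$ number at most $pk+2$, and $pk+2\le pk^2+(p-2)k+2$ holds for all $p\ge 1,\ k\ge 2$ (when $|K|$ equals the bound one simply takes $K_0=K$). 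Then I would pass to the induced subgraph $G_0:=G[K_0\cup\{x\}]$. It has order $pk^2+(p-2)k+3$; the set $K_0$ is a clique of the prescribed size $pk^2+(p-2)k+2$; $K_0$ is \emph{maximal} in $G_0$ because the only vertex of $G_0$ outside $K_0$ is $x$ and $x$ is not adjacent to $w\in K_0$; and $x$ is adjacent to at least $pk+1$ vertices of $K_0$. Hence $G_0$ is isomorphic to a member of $\mathcal{F}_2(p,k)\subseteq\mathcal{F}(p,k)$ and is an induced subgraph of $G$, contradicting the standing hypothesis.

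The step requiring the most care is guaranteeing that $K_0$ remains a \emph{maximal} clique after all of $G$ outside $K_0\cup\{x\}$ is deleted: this is exactly why $K_0$ must be built to contain a non-neighbour of $x$, and why the count $pk+2\le pk^2+(p-2)k+2$ has to be checked so that there is room for both $pk+1$ neighbours and one non-neighbour inside a set of size $pk^2+(p-2)k+2$. Everything else is bookkeeping; in particular, neither the minimum edge-degree hypothesis nor Lemma \ref{lem1} / Lemma \ref{nonadj} is used in this lemma.
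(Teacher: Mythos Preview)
Your proof is correct and follows exactly the paper's approach: assume $x$ has $pk+1$ neighbours in $K$ and exhibit an induced subgraph belonging to $\mathcal{F}_2(p,k)$. The paper's own proof is a one-line assertion of this fact, whereas you have carefully supplied the details (choosing a non-neighbour $w$, trimming $K$ to size exactly $pk^2+(p-2)k+2$, and verifying maximality of $K_0$ inside $G_0$) that make the argument airtight.
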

\begin{proof}
If there is a vertex $x \in V(G)-V(K)$ such that $x$ is joined to at least $kp+1$ vertices in $K$ 
then $G$ will contain a member of $\mathcal{F}_2(p,k)$ as an induced subgraph. 
\end{proof}

\begin{lemma}\label{lem2}
Every vertex of $G$ is in at most $k$ distinct members of $\mathcal{K}$.
\end{lemma}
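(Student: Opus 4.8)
The plan is to argue by contradiction: suppose some vertex $v$ lies in $k+1$ distinct members $K_0, K_1, \ldots, K_k$ of $\mathcal{K}$. Each $K_i$ has size at least $pk^2+(p-2)k+2$, so by Lemma \ref{uniqie} (applied to the hypergraph realization obtained once we know $G \in L_k^{(p)}$ — but note we do not yet know this, so the argument must instead be purely combinatorial, using Lemmas \ref{nonadj}, \ref{join}, and the forbidden family $\mathcal{F}_3$). The key structural fact I would extract first is that two distinct members $K_i, K_j$ of $\mathcal{K}$ satisfy $|V(K_i)\cap V(K_j)| \le p$: indeed, if the intersection had size at least $p+1$, then since $|V(K_i)|, |V(K_j)| < 2(pk^2+(p-2)k+2)$ after restricting to a suitable induced subgraph on $V(K_i)\cup V(K_j)$, we would obtain an induced member of $\mathcal{F}_3(p,k)$, contradicting our standing hypothesis on $G$. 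So any two of the $k+1$ cliques through $v$ meet in at most $p$ vertices.

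Next I would count neighbours of $v$ in a controlled way to derive a contradiction with Lemma \ref{join} or with the edge-degree hypothesis. Pick $v \in K_0 \cap K_1 \cap \cdots \cap K_k$. For each $i \ge 1$, choose a vertex $w_i \in K_i \setminus K_0$ — this is possible because $K_0$ and $K_i$ are distinct maximal cliques, so neither contains the other. Since $w_i \notin K_0$, Lemma \ref{join} gives that $w_i$ is adjacent to at most $pk$ vertices of $K_0$. The edge $vw_i$ lies in $K_i$, which has many vertices, and I want to locate an edge at $v$ whose edge-degree is forced to be small. Concretely, consider the edge $vw_1$. Its triangles are governed by $N(\{v,w_1\})$. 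The clique $K_1$ contributes $|V(K_1)| - 2 \ge pk^2+(p-2)k$ vertices to $N(\{v,w_1\})$. Any other common neighbour $u$ of $v$ and $w_1$ that is \emph{not} in $K_1$ must, together with the remaining structure, be constrained; in fact $u$ lies in some clique of $\mathcal{K}$ through $v$ (by Lemma \ref{lem1}, the edge $vu$ is in a big clique, which is then in $\mathcal{K}$), and that clique is one of $K_0, K_2, \ldots, K_k$ (at most $k$ of them), each meeting $K_1$ in at most $p$ vertices — so there are at most $kp$ such $u$ coming from these cliques, plus we must account for $u$ in cliques through $w_1$ but not $v$.

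The main obstacle I anticipate is exactly this last bookkeeping: ruling out or bounding common neighbours of $v$ and $w_i$ that do not come from the cliques $K_0,\ldots,K_k$, i.e. common neighbours lying only in maximal cliques through $w_i$ but not through $v$. To handle this I would use Lemma \ref{join} in the direction that $v \notin K'$ for such a clique $K'$ forces $v$ adjacent to at most $pk$ of its vertices, bounding that contribution too. Assembling the bounds, $N(\{v,w_1\})$ decomposes into: vertices of $K_1$ (at most $|V(K_1)|-2$, but also this clique is "used up"), plus at most $kp$ from the other cliques through $v$, plus at most $(k-1)(pk)$ or so from cliques through $w_1$ — yielding an upper bound on $|N(\{v,w_1\})|$ that is still polynomial in $k,p$ but, crucially, combined across the $k+1$ cliques forces some vertex of $K_0$ to lie in too many members of $\mathcal{K}$ or forces a violation. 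The cleanest route is probably: each $K_i$ ($i \ge 1$) has $\ge pk^2+(p-2)k+2 - kp$ vertices outside $K_0$ (using pairwise intersection $\le p$ and $k$ other cliques), these sets are nearly disjoint, and each such vertex $w$ has at most $pk$ neighbours in $K_0$, so $v$ — which is adjacent to all of $K_0$ — would need $|V(K_0)|$ large enough to absorb $k$ disjoint "bundles," and then an edge $vw_i$ has edge-degree forced below $f(k,p)$, contradicting the hypothesis. I would finish by checking the arithmetic that $f(k,p) = pk^3+(p-3)k+1$ is precisely large enough to make this contradiction bite.
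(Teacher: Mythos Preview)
Your approach diverges from the paper's and has a genuine gap. You aim to reach a contradiction by showing that some edge $vw_1$ has edge-degree below $f(k,p)$. But members of $\mathcal{K}$ are maximal cliques of size \emph{at least} $pk^2+(p-2)k+2$; there is no upper bound on their size. If $K_1$ happens to be enormous, then $N(\{v,w_1\}) \supseteq V(K_1)\setminus\{v,w_1\}$ is already enormous, and no amount of bookkeeping on the other cliques will force the edge-degree of $vw_1$ below $f(k,p)$. The contradiction you are chasing simply need not occur. (Your final ``cleanest route'' paragraph is also left vague: it is never explained what it means for $K_0$ to ``absorb $k$ disjoint bundles,'' nor why that would bound any edge-degree from above.)

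The paper's argument is entirely different and does not touch edge-degrees at all: it builds a $(k{+}1)$-claw centred at $v$, one leaf $a_i$ from each $K_i$, directly contradicting the hypothesis that $G$ contains no $(k{+}1)$-claw. The construction is a short induction. Having chosen pairwise non-adjacent $a_1\in K_1,\ldots,a_r\in K_r$ (each adjacent to $v$), one uses Lemma~\ref{join} to say each $a_i$ has at most $pk$ neighbours in $K_{r+1}$, hence at most $pk-1$ besides $v$; combined with $|K_{r+1}\cap K_i|\le p$ (the observation you correctly extracted from $\mathcal{F}_3$), the lower bound $|K_{r+1}|\ge pk^2+(p-2)k+2$ leaves room for a vertex $a_{r+1}\in K_{r+1}\setminus\{v\}$ non-adjacent to all of $a_1,\ldots,a_r$. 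At $r=k$ one obtains the forbidden $(k{+}1)$-claw. Note in particular that the minimum-edge-degree hypothesis $\delta_e\ge f(k,p)$ plays no role in this lemma; only Lemma~\ref{join}, the pairwise clique-intersection bound, and the excluded $(k{+}1)$-claw are used.
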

\begin{proof}
Suppose that the result is not true and let $x$ be a vertex of $G$ which is in
$k + 1$ distinct elements $K_1,\ldots ,K_{k+1}$ of $\mathcal{K}$. Now
let $a_1 \in K_1$ and $a_1 \neq  x$. By Lemma \ref{join}, it follows that $a_1$ is joined to at most $pk-1$ vertices
of $K_2$ other than $x$. Now since $|K_1 \cap K_2| \leq p $, 
there exists an $a_2 \in K_2$ such that $\langle x; a_1, a_2\rangle$ is a 2-claw. Suppose that we
have constructed an r-claw $\langle x; a_1, \ldots , a_r\rangle$, $r \leq k$, in $G$ such that $a_i \in  K_i$. Then each $a_i$
is joined to at most $pk-1$ vertices distinct from $x$ of $K_{r+ 1}$ and $\left|K_{r+1}\bigcap \left(\bigcup\limits_{i=1}^rK_i\right)\right| \leq rp$. Now $|K_{r+1}| \geq pk^2+(p-2)k+2 > r(kp-1)+r(p-1)+1$, hence  there exists an $a_{r+1} \in  K_{r+1}$ such that $\langle x; a_1, \ldots , a_{r+1}\rangle $  is an $(r + 1)$-claw in
$G$. Taking $r = k$, we get a $(k + 1)$-claw in $G$, contradicting the hypothesis.
\end{proof}

\begin{lemma}\label{lem3}
If $K_1, K_2  \in \mathcal{K}$ then $|V(K_1) \cap V(K_2)| \leq p$.
\end{lemma}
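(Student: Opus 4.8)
The plan is to argue by contradiction, assuming $K_1, K_2 \in \mathcal{K}$ with $|V(K_1) \cap V(K_2)| \geq p+1$, and to derive that $G$ contains an induced member of $\mathcal{F}_3(p,k)$, contradicting the hypothesis. The only subtlety in doing this is that the definition of $\mathcal{F}_3(p,k)$ requires the ambient induced subgraph to have order \emph{less than} $2(pk^2+(p-2)k+2)$, so I cannot simply take the induced subgraph on $V(K_1) \cup V(K_2)$ (which could be as large as $2(pk^2+(p-2)k+2) - (p+1)$, actually that is fine --- but I must double-check the bound). In fact $|V(K_1) \cup V(K_2)| = |V(K_1)| + |V(K_2)| - |V(K_1) \cap V(K_2)| \leq 2(pk^2+(p-2)k+2) - (p+1) < 2(pk^2+(p-2)k+2)$ since $p \geq 1$, so the induced subgraph $G' = G[V(K_1) \cup V(K_2)]$ already has the right order.

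The main remaining point is to check that $K_1$ and $K_2$, viewed inside $G'$, are still \emph{maximal} cliques of size exactly $pk^2+(p-2)k+2$ --- membership in $\mathcal{K}$ only gives size \emph{at least} that, so first I would note that if either clique is strictly larger I may pass to an arbitrary subclique of the exact required size that still contains enough of the intersection; more carefully, I should shrink $K_1$ and $K_2$ down to cliques $K_1'' \subseteq K_1$, $K_2'' \subseteq K_2$ each of size exactly $pk^2+(p-2)k+2$ while keeping $|V(K_1'') \cap V(K_2'')| \geq p+1$ (possible because I can first fix $p+1$ common vertices and then pad each side independently from $V(K_1) \setminus V(K_2)$ and $V(K_2) \setminus V(K_1)$; these sets are large enough since each $K_i$ has at least $pk^2+(p-2)k+2$ vertices and the intersection has at most $\ldots$ well, it could a priori be all of the smaller clique, but then the cliques would not be distinct as required --- I will handle the case $K_1 \subseteq K_2$ separately, where distinctness forces $K_2 \supsetneq K_1$ and I can still trim). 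Then I would argue these trimmed cliques are maximal \emph{within} the induced subgraph $G'' = G[V(K_1'') \cup V(K_2'')]$: a vertex of $K_2'' \setminus K_1''$ cannot be adjacent to all of $K_1''$, because by Lemma~\ref{join} any vertex outside $K_1 \supseteq K_1''$... no, wait --- Lemma~\ref{join} is about vertices outside a clique in $\mathcal{K}$, and $K_1'' $ need not be in $\mathcal{K}$.

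So instead, the clean route is: do \emph{not} trim. Take $G' = G[V(K_1) \cup V(K_2)]$, which has order $< 2(pk^2+(p-2)k+2)$ as computed. Inside $G'$, the sets $V(K_1)$ and $V(K_2)$ induce cliques; extend each to a maximal clique $\widetilde{K_1}, \widetilde{K_2}$ of $G'$. If $\widetilde{K_1}$ had size $> pk^2+(p-2)k+2$, then since every vertex of $G'$ lies in $V(K_1) \cup V(K_2)$ and $\widetilde{K_1} \supseteq V(K_1)$ already accounts for $|V(K_1)|$ vertices, the extra vertices come from $V(K_2) \setminus V(K_1)$; but such a vertex $v$ adjacent to all of $V(K_1)$ contradicts Lemma~\ref{join} applied in $G$ (with $K = K_1 \in \mathcal{K}$, $v \notin K_1$, $v$ joined to all $|V(K_1)| \geq pk^2+(p-2)k+2 > pk$ vertices of $K_1$). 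Hence $\widetilde{K_1} = V(K_1)$ is a maximal clique of $G'$ of size exactly $pk^2+(p-2)k+2$, and likewise $\widetilde{K_2} = V(K_2)$; they are distinct (distinct in $G$, and no vertex was added) and $|V(\widetilde{K_1}) \cap V(\widetilde{K_2})| = |V(K_1) \cap V(K_2)| \geq p+1$. Thus $G'$ is an induced subgraph of $G$ lying in $\mathcal{F}_3(p,k)$, the desired contradiction.

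I expect the main obstacle to be exactly this bookkeeping about maximality in the \emph{induced} subgraph versus maximality in $G$, together with making sure the order bound of $G'$ is strict; both are handled above by invoking Lemma~\ref{join} to forbid extending $V(K_i)$ and by the elementary inclusion–exclusion count using $p \geq 1$. (One should also note $|V(K_1) \cap V(K_2)| \geq p+1$ is precisely the threshold appearing in the definition of $\mathcal{F}_3$, written there as $\geq p+1$, so no slack is lost.) Everything else is routine.
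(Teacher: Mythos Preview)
Your overall strategy matches the paper's one–sentence proof (assume $|V(K_1)\cap V(K_2)|\ge p+1$ and exhibit an induced member of $\mathcal F_3(p,k)$), but the ``clean route'' you settled on has a genuine gap. Members of $\mathcal K$ are maximal cliques of size \emph{at least} $N:=pk^2+(p-2)k+2$, not exactly $N$. Your inclusion--exclusion bound $|V(K_1)\cup V(K_2)|\le 2N-(p+1)$ tacitly used $|V(K_1)|+|V(K_2)|\le 2N$, which need not hold; and your conclusion ``$\widetilde{K_1}=V(K_1)$ is a maximal clique of $G'$ of size exactly $N$'' likewise fails when $|V(K_1)|>N$. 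Thus $G'=G[V(K_1)\cup V(K_2)]$ is not shown to lie in $\mathcal F_3(p,k)$ as defined.

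The trimming route you abandoned is the correct one, and your reason for abandoning it was misplaced. Set $I=V(K_1)\cap V(K_2)$. Since $K_1,K_2$ are distinct \emph{maximal} cliques, neither contains the other, so pick $w\in V(K_2)\setminus V(K_1)$; applying Lemma~\ref{join} in $G$ to $K_1\in\mathcal K$ and $w$ gives $|I|\le pk<N$. Now take $K_i''=I\cup A_i$ with $A_i\subseteq V(K_i)\setminus V(K_{3-i})$ of size $N-|I|$ (possible since $|V(K_i)|-|I|\ge N-|I|$). Then $|K_i''|=N$, $V(K_1'')\cap V(K_2'')=I$ has size $\ge p+1$, and $|V(K_1'')\cup V(K_2'')|=2N-|I|<2N$. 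For maximality of $K_1''$ inside $G''=G[V(K_1'')\cup V(K_2'')]$: every $v\in V(K_2'')\setminus V(K_1'')=A_2$ lies in $V(K_2)\setminus V(K_1)$, so $v\notin K_1$, and Lemma~\ref{join} applied to $K_1\in\mathcal K$ (\emph{not} to $K_1''$) shows $v$ is joined to at most $pk<N=|K_1''|$ vertices of $K_1\supseteq K_1''$. Symmetrically for $K_2''$. Hence $G''\in\mathcal F_3(p,k)$, the contradiction you wanted. The point you missed is that the vertex to be excluded lies outside the \emph{original} big clique $K_1$, so Lemma~\ref{join} still applies even though $K_1''\notin\mathcal K$.
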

\begin{proof}
If there exist $K_1, K_2  \in \mathcal{K}$ such that $|V(K_1) \cap V(K_2)| \geq p+1$ then $G$ will 
contain a member of $\mathcal{F}_3(p,k)$ as an induced subgraph. 
\end{proof}

\noindent \textbf{Proof of Theorem 1.1}: The necessity that $G$ has no induced subgraph isomorphic
to $\mathcal{F}(p,k)$ follows from Lemma \ref{necessary}. The sufficiency that $G \in  L_k^{(p)}$
follows from Lemmas \ref{lem1}, \ref{lem2}, \ref{lem3} and Proposition \ref{prop}.

\section{Degree sequence of hypergraphs}
In this  Section, we consider the problem to characterize the degree sequence $(d_1,d_2,\ldots,d_n)$ of $k$-uniform hypergraphs. This problem was shown to be NP-complete in \cite{onn,onn1}. We consider the simple case when all the degrees are same. That is, when does $(d,d,\ldots, d)$ correspond to the degree sequence of a $k$-uniform hypergraph on $[N]$.  By double counting tuples $(i,E)$ where $i\in E$, and $E$ is an edge in
the hypergraph, it follows that $\sum_{i=1}^N d_i$ is divisible by  $k$.
We show using network flows that this necessary condition 
is also sufficient if the degrees are constant. This proof is  adapted from
the proof of Baranyai's theorem (\cite {baranyai}).

Let ${\cal P}:=\binom {[N]} k$ denote the sets of size $k$ of the set
$[N]$ and $N\ge k$. Let $M=\frac {k\binom {N} {k}}{L}$. We will partition $\cal P$ into $M$ disjoint collections of $k$ sets of same size. Let the collection be labelled as ${\cal S}_1,{\cal S}_2,\ldots ,{\cal S}_M$. Let $L=LCM (N,k)$. Number of $k$ sets in each class ${\cal S}_i$ is $\frac L k$ and each element $j\in [N]$ appears in a $k$-set in each
${\cal S}_i$ exactly $\frac L N$ times. Note that $M=\frac {k\binom {N} {k}}{L}$ and it is an integer.

 The existence of such a partition when $k\mid N$ is Baranyai's theorem and all known proofs of Baranyai's theorem use integrality of network flow (or an equivalent theorem).

We prove the existence of the required partition 
by induction on $\ell$ the number of alphabets. In particular we would show the following Theorem:
\begin{thm}\label{partition}
{\it For all $[\ell], \ell \le N$ there exists a collection $\{{\cal S}_1, {\cal S}_2,\ldots , {\cal S}_M \}$ 
of $\cal P$ such that a set $T\subset [\ell]$ appears $\binom {N-\ell} {k-|T|}$ times.}
\end{thm}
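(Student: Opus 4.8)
The plan is to prove Theorem~\ref{partition} by induction on $\ell$, following the network-flow strategy that underlies all proofs of Baranyai's theorem. The statement to be proved is that for every $\ell \le N$ there is a partition $\{\mathcal{S}_1,\ldots,\mathcal{S}_M\}$ of $\mathcal{P} = \binom{[N]}{k}$ with the property that every subset $T \subseteq [\ell]$ lies in exactly $\binom{N-\ell}{k-|T|}$ of the blocks; when $\ell = N$ this forces each $\mathcal{S}_i$ to contain every $T\subseteq[N]$ with $|T|=k$ exactly $\binom{0}{0}=1$ time and each proper $T$ zero times in the sense demanded, which, together with the counting constraints on sizes and on the frequency of each element, yields the partition described just before the theorem. (The base case $\ell = 0$ is trivial: any partition of $\mathcal{P}$ into $M$ blocks of equal size $\binom{N}{k}/M = L/k$ works, and the only subset $T\subseteq[0]$ is $\emptyset$, which lies in all $M = \binom{N}{k}/\binom{N}{k}$ wait—lies in all $M$ blocks, and $\binom{N-0}{k-0}=\binom{N}{k}$; so one rather takes $\ell=0$ with the bookkeeping that $\emptyset$ "appears" $\binom{N}{k}$ times counting multiplicity of $k$-sets, which is exactly $M\cdot(L/k)$. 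I would state the base case cleanly with the convention that matches the inductive step.)

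**The inductive step via integral flow.**

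Assume the partition $\{\mathcal{S}_1,\ldots,\mathcal{S}_M\}$ exists for $[\ell-1]$. I want to refine it to a partition adapted to $[\ell]$; the only thing that changes is whether we record membership of the new element $\ell$. For each block $\mathcal{S}_i$ and each subset $T\subseteq[\ell-1]$, the $k$-sets in $\mathcal{S}_i$ whose trace on $[\ell-1]$ equals $T$ split into those containing $\ell$ and those not containing $\ell$; I must redistribute the $k$-sets among blocks so that, for every $T\subseteq[\ell]$, the count becomes $\binom{N-\ell}{k-|T|}$. Concretely, build a bipartite transportation network: on one side a node for each pair $(\mathcal{S}_i, T)$ with $T\subseteq[\ell-1]$, with supply equal to the current number of $k$-sets in $\mathcal{S}_i$ with trace $T$; on the other side a node for each pair $(\mathcal{S}_j, T')$ with $T'\subseteq[\ell]$, with demand $\binom{N-\ell}{k-|T'|}$; connect $(\mathcal{S}_i,T)$ to $(\mathcal{S}_j,T\cup\{\ell\})$ and to $(\mathcal{S}_j, T)$ (for $T\subseteq[\ell-1]$, identifying $T$ as a subset of $[\ell]$ not containing $\ell$) with appropriate capacities, plus source and sink. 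The fractional flow that splits each supply in proportion $\binom{N-\ell}{k-|T|-1} : \binom{N-\ell}{k-|T|}$ into the two sinks and then uniformly across blocks is feasible because $\binom{N-\ell}{k-|T|-1}+\binom{N-\ell}{k-|T|} = \binom{N-\ell+1}{k-|T|}$, which is precisely the induction hypothesis count for $T\subseteq[\ell-1]$. By the integral max-flow/min-cut theorem, since all capacities are integers, an integral flow of the same value exists; reading off which $k$-set goes to which block gives the desired partition for $[\ell]$.

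**The main obstacle.**

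The delicate point is not the existence of an integral flow — that is immediate once the network is set up — but verifying that the fractional target is actually achievable as a flow, i.e.\ that Hall-type / conservation conditions hold so that the prescribed demands $\binom{N-\ell}{k-|T'|}$ sum correctly both per block and per trace-class. This is where the identities among binomial coefficients do the work: one must check that $\sum_{T'\subseteq[\ell]} \binom{N-\ell}{k-|T'|}\binom{\ell}{|T'|}$-type sums reproduce the total $|\mathcal{S}_i| = L/k$, and that Pascal's identity $\binom{N-\ell}{k-|T|-1}+\binom{N-\ell}{k-|T|}=\binom{N-\ell+1}{k-|T|}$ matches supplies to demands across the bipartition. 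I expect the bookkeeping of these counting identities — and confirming that the per-block sizes $L/k$ and per-element frequencies $L/N$ are preserved under the redistribution — to be the most technical part; the network-flow integrality is the clean engine, and the induction frame is routine, so the real care goes into the combinatorial accounting. After the induction terminates at $\ell = N$, the resulting partition is exactly the Baranyai partition described before the theorem, and the polynomial-time claim follows since each flow computation is of polynomial size in $\binom{N}{k}$.
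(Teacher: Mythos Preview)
Your overall strategy---induction on $\ell$, an integral max-flow/min-cut argument, and Pascal's identity $\binom{N-\ell}{k-|T|-1}+\binom{N-\ell}{k-|T|}=\binom{N-\ell+1}{k-|T|}$ to match supplies and demands---is exactly the paper's approach, and the closing remarks about what the ``main obstacle'' is are accurate. But the specific network you build is both overcomplicated and incorrect in one concrete place.

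The paper does \emph{not} treat the $\mathcal{S}_i$ as blocks of an actual partition of $\binom{[N]}{k}$ whose $k$-sets are redistributed at each step. Instead each $\mathcal{S}_i$ is a multiset of subsets of $[\ell]$ (``templates''), and the inductive step only decides, for each template in each $\mathcal{S}_i$, whether to append the new element $\ell+1$; nothing moves between blocks. Consequently the paper's network is much smaller than yours: part $A$ consists of the blocks $\mathcal{S}_1,\ldots,\mathcal{S}_M$ (not pairs $(\mathcal{S}_i,T)$), part $B$ consists of subsets $T\subseteq[\ell]$ (not pairs $(\mathcal{S}_j,T')$), with $r$ unit-capacity edges from $\mathcal{S}_i$ to $T$ when $T$ has multiplicity $r$ in $\mathcal{S}_i$; the source feeds each $\mathcal{S}_i$ with capacity $L/N$, and each $T$ drains to the sink with capacity $\binom{N-\ell-1}{k-|T|-1}$. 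Both the source cut and the sink cut have value $ML/N=\binom{N-1}{k-1}$, so an integral max flow saturates both, and the saturated middle edges tell you which templates receive $\ell+1$.

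Your network, by contrast, assigns demand $\binom{N-\ell}{k-|T'|}$ to every pair $(\mathcal{S}_j,T')$. That cannot be right: summing these demands over $T'\subseteq[\ell]$ for a fixed $j$ gives $\binom{N}{k}$ by Vandermonde, not $|\mathcal{S}_j|=L/k$; and at $\ell=N$ it would force every block to contain every $k$-set exactly once, contradicting the fact that the $\mathcal{S}_i$ partition $\mathcal{P}$. The count $\binom{N-\ell}{k-|T|}$ is a \emph{total} over all blocks, not a per-block target, so the demand nodes must be indexed by $T'$ alone. Your base-case confusion (the stray ``wait'' when $\binom{N}{k}$ exceeds the number $M$ of blocks) is a symptom of the same misreading. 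Once you drop the per-block demand and the cross-block redistribution, your argument collapses to the paper's.
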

Note that  our desired partition corresponds to $\ell=N$.

\begin{proof}\mbox{ } \\
\noindent {\bf Base case:} $[\ell]=1$.\\
In this case the empty set appears $\binom {N-1}{k}$ times and the singleton $\{1\}$ appears $\binom {N-1}{k-1}$ times. it is easy to see that this possible. Each ${\cal S}_i$ contains the empty set $\frac 1 M \binom {N-1}{k}$ times and $\{1\} $, $\frac 1 M \binom {N-1}{k-1}$ times.


\noindent {\bf Induction hypothesis:}  The statement holds true for $[\ell], \ell<N$. 

\noindent {\bf Induction step:}  True for $[\ell +1$]\\
Suppose the partition satisfying the statement for $\ell$ already exist. We will select some of the sets and add the element $\ell +1$ such that the statement still holds. These sets will be selected using the integrality of the network flow. We first construct a Network as follows. Construct a directed bipartite multi graph with partitions $A$ and $B$ as follows. The vertices in $A$ are the classes ${\cal S}_1$  to ${\cal S}_M$. The vertices in $B$ are the subsets of $[\ell]$ of size at most $k$. There are $r$ directed edges from ${\cal S}_i$ to the set $T$ if the set $T$ appears in ${\cal S}_i$ $r$-times. The capacity of each of these edges is 1. There are two other special vertices source $s$ and sink $t$. There is an edge from source $s$ to every vertex in $S_i$ with capacity $\frac L N$. Every vertex $T$ in $B$ is connected to the sink $t$ with capacity $\binom {N-1-\ell}{k-|T|-1}$. This network has integral flows and it has a min cut given by the edges out of source $s$ and it has another min cut that contains all edges which go to sink $t$. 
Capacities of both these cuts is 
$\frac {ML} {N} = \binom {N-1}{k-1}.$ 
By the integral version of the max flow min cut Theorem there is an integral flow where every edge out of source $s$ and every edge into sink $t$ is saturated. 
The edges between $A$ and $B$ will have flow value  $0$ or $1$. 
Now we construct the collection $\{{\cal S}'_i\}$ for $[\ell +1]$ given the collection $\{{\cal S}_i\}$ for $[\ell]$.

\begin{figure}
    \centering
        \includegraphics[height=4cm]{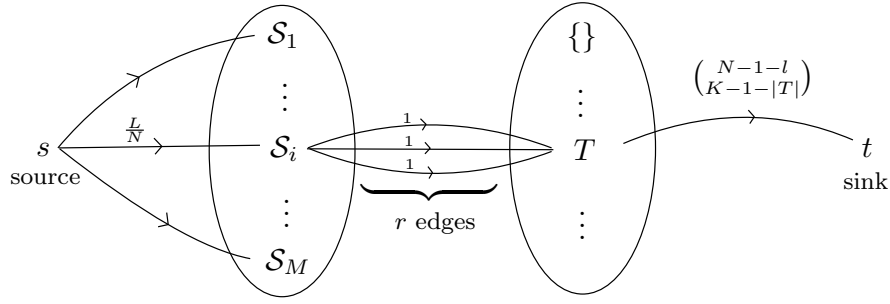} 
        \caption{ The flow network used to construct the hypergraph.  }
        \label{flow_hypergraph}
        \end{figure}

If $T$ is an element of ${\cal S}_i$ with multiplicity $r$ and $j$ of those edges have flow value $1$ then we add the element $\ell+1$ to $j$ of the sets $T$ in ${\cal S}_i$. This shows that the set $T\cup \{\ell+1\}$ appears in the collection 
$\{{\cal S}'_i\}$ $\binom {N-1-\ell}{k-1-|T|}$ times as 
required. Any set $T$ that does not contain $\ell +1$ 
appears 
$$\binom {N-\ell}{k-|T|}- \binom{N-\ell -1}{k-1-|T|}=\binom{N-\ell-1}{k-|T|} \mbox{ times.}$$ This proves the induction step and completes the proof.

\end{proof}

\begin{cor}\label{deg_hyp}
 $(d,d,\ldots,d)$ is the degree sequence of a $k$-regular hypergraph on vertex set $[N]$ if  and only if $k\mid dN$.
\end{cor}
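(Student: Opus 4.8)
The plan is to deduce Corollary~\ref{deg_hyp} directly from Theorem~\ref{partition}. The necessity of $k\mid dN$ is immediate: double counting incidences $(i,E)$ with $i\in E$ in any $k$-uniform hypergraph $H$ on $[N]$ with all degrees equal to $d$ gives $k\cdot|E| = \sum_{i=1}^N d_H(i) = dN$, so $k$ must divide $dN$. The real content is sufficiency, and here I would use the partition produced by Theorem~\ref{partition} with $\ell = N$, which splits $\mathcal{P} = \binom{[N]}{k}$ into $M$ collections $\mathcal{S}_1,\dots,\mathcal{S}_M$ such that every singleton $\{j\}$ appears $\binom{N-1}{k-1}$ times across the whole partition, and (by the structure built in the proof) each $\mathcal{S}_i$ contains exactly $L/k$ sets with each $j\in[N]$ appearing in exactly $L/N$ of them, where $L = \mathrm{LCM}(N,k)$.

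The key step is to reinterpret this partition as a balanced ``menu'' of possible edge-sets from which we assemble $H$. Think of it this way: the full partition of $\binom{[N]}{k}$ into the $M$ classes is exactly a proper edge colouring of the complete $k$-uniform hypergraph where each colour class is ``$1$-regular up to scaling'' — each vertex is covered $L/N$ times per class. So taking the union of the $k$-sets in $t$ of these classes $\mathcal{S}_{i_1},\dots,\mathcal{S}_{i_t}$ (as a multiset of edges) yields a $k$-uniform hypergraph in which every vertex has degree exactly $t\cdot L/N$. Thus we can realize any degree $d$ that is a nonnegative integer multiple of $L/N$ by choosing $t = dN/L$ classes — and we need $t$ to be a nonnegative integer not exceeding $M$.

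So the plan reduces to a divisibility/counting check: show that $k\mid dN$ forces $d$ to be a multiple of $L/N$. Write $L = \mathrm{LCM}(N,k) = Nk/\gcd(N,k)$, so $L/N = k/\gcd(N,k)$. Then $d$ is a multiple of $L/N$ iff $k/\gcd(N,k)$ divides $d$ iff $k \mid d\gcd(N,k)$. Since $\gcd(N,k)\mid N$, we have $d\gcd(N,k)\mid dN$, but this is the wrong direction; instead write $dN = d\gcd(N,k)\cdot (N/\gcd(N,k))$ and note $\gcd\big(k/\gcd(N,k),\, N/\gcd(N,k)\big) = 1$, so $k/\gcd(N,k)$ divides $dN/\gcd(N,k) = d\cdot(N/\gcd(N,k))$ forces $k/\gcd(N,k)\mid d$. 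Hence $k\mid dN \iff (L/N)\mid d$, giving a valid integer $t = d/(L/N) = dN/L$; and $t\le M$ holds whenever the hypergraph is allowed to be a multi-hypergraph, or for $d$ in the meaningful range one observes $dN/L \le k\binom{N}{k}/L = M$ since $d \le \binom{N-1}{k-1} = k\binom{N}{k}/N \ge$ the relevant bound — I would state this range constraint cleanly rather than belabor it.

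The main obstacle I anticipate is the bookkeeping around what ``$(d,d,\dots,d)$ is a degree sequence'' is meant to allow: if genuine (repetition-free) edge sets are required, then one must also check $d \le \binom{N-1}{k-1}$ and verify that the chosen $t$ classes, being pairwise disjoint as subfamilies of $\binom{[N]}{k}$, contribute distinct edges — which they automatically do since the $\mathcal{S}_i$ partition $\binom{[N]}{k}$. So the simple-hypergraph statement follows with the extra harmless hypothesis $d\le\binom{N-1}{k-1}$, which is anyway necessary. The rest is the elementary number theory in the previous paragraph, which I would present in one or two lines.
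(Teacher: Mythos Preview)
Your proof is correct and follows essentially the same route as the paper: apply Theorem~\ref{partition} at $\ell=N$ to obtain the classes $\mathcal{S}_1,\dots,\mathcal{S}_M$, then take the union of $dN/L$ of them as the edge set. You in fact supply more detail than the paper, which simply asserts that $k\mid dN$ forces $d$ to be a multiple of $L/N$ and takes $\bigcup_{i=1}^{dN/L}\mathcal{S}_i$; your explicit divisibility check via $L/N=k/\gcd(N,k)$ and your discussion of the bound $t\le M$ (equivalently $d\le\binom{N-1}{k-1}$, needed for a simple hypergraph) fill in points the paper leaves implicit.
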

\begin{proof} \mbox{ } \\
Necessity: Follows from double counting $(i,E)$, $i\in E$.\\
Sufficiency: 
 By Theorem~\ref{partition} for $\ell=N$ we get a partition ${\cal S}_1, {\cal S}_2,\ldots , {\cal S}_M \}$ 
of $\cal P$. 
 Since $d\mid kN$, $d$ is a multiple of $\frac L N$.  We take the edge set of the hypergraph as the union $\cup_{i=1}^{\frac {dN} { L } } {\cal S}_i$. In this hypergraph every vertex appears exactly in $d$ edges. 
\end{proof}

It may be noted that flow algorithm runs in polynomial time. Hence we can find the partition of Theorem~\ref{partition} in polynomial time  and also the required hypergraph in polynomial time.

The Corollary~\ref{deg_hyp} be considered as a  special case of $t-(v,k,\lambda)$ design. It would be interesting if the technique could be extended to include more general cases. The design problem was solved by  Keevash in \cite{keevash} using multiple ideas.

\section*{Acknowledgements:} We would like to thank Professor D. K. Ray-Chaudhuri for his inputs at the beginning of this project. We would also like to thank Professor Tarkeshwar Singh for his support.

\end{document}